\newcommand{\greatcircle}[5][]{%
\path[#1,pattern=north west lines,pattern color=#1!60,rotate=#5,dashed] (#2) circle [x radius=#3, y radius=#4];
\begin{scope}[rotate=#5]
\clip (#3,0) rectangle ([xshift=-0.1,yshift=-0.1]-#3,-#4);
\draw[#1] (#2) circle [x radius=#3, y radius=#4];
\end{scope}
}
\title[   ]{Emergent behaviors in group ring flocks}
\author[Ha]{Seung-Yeal Ha}
\address[Seung-Yeal Ha]{\newline Department of Mathematical Sciences and Research Institute of Mathematics \newline Seoul National University, Seoul 08826 and \newline
Korea Institute for Advanced Study, Hoegiro 85, 02455, Seoul, Republic of Korea}
\email{syha@snu.ac.kr}
\author[Park]{Hansol Park}
\address[Hansol Park]{\newline Department of Mathematical Sciences\newline Seoul National University, Seoul 08826, Republic of Korea}
\email{hansol960612@snu.ac.kr}
\newtheorem{theorem}{Theorem}[section]
\newtheorem{lemma}{Lemma}[section]
\newtheorem{corollary}{Corollary}[section]
\newtheorem{proposition}{Proposition}[section]
\newtheorem{remark}{Remark}[section]
\newtheorem{definition}{Definition}[section]
\newcommand{\bbr}{\mathbb R}
\newcommand{\bbs}{\mathbb S}
\newcommand{\bbz}{\mathbb Z}
\newcommand{\bbc}{\mathbb C}
\newcommand{\bbk}{\mathbb K}
\begin{document}

\date{\today}

\subjclass{82C10, 82C22, 35B37} 
\keywords{Aggregation, equilibrium, group ring, LaSalle invariance principle}

\thanks{\textbf{Acknowledgment.} The work of S.-Y. Ha is supported by National Research Foundation of Korea (NRF-2020R1A2C3A01003881) and the work of H. Park is supported by Basic Science Research Program through the National Research Foundation of Korea(NRF) funded by the Ministry of Education (2019R1I1A1A01059585). Both authors would like to thank Mr. Lee, Gangsan for the suggestion of {\it ``Group ring"}, and both authors are also would like to thank {\it Stack exchange}(link :\url{https://tex.stackexchange.com/questions/161888/how-to-draw-big-circle-intersection-of-a-plane-with-a-sphere}) for Figure 2. }

\begin{abstract}
We present a first-order aggregation model on a group ring, and study its asymptotic dynamics. In a positive coupling strength regime, we show that the flow generated by the proposed model tends to an equilibrium manifold asymptotically. For this, we introduce a Lyapunov functional which is non-increasing along the flow, and using the temporal decay of the nonlinear functional and the LaSalle invariance principle, we show that the flow converges toward an equilibrium manifold asymptotically. We also show that the structure of an equilibrium manifold is strongly dependent on the structure of an underlying group.  
\end{abstract}

\maketitle \centerline{\date}


\section{Introduction} \label{sec:1}
\setcounter{equation}{0}
Collective behaviors of natural and man-made complex systems often appear in our nature and society, e.g., aggregation of bacteria \cite{T-B}, swarming of fish \cite{D-M1, T-T}, flocking of multi-agent system \cite{C-Sm, V-C-B-C-S}, flashing of fireflies \cite{B-B}, heart beatings by pacemaker cells and hand clapping \cite{Pe}, self-organization of robot and multi-agent systems \cite{F-T-H, O2} etc. We refer to survey articles and books \cite{A-B, A-B-F, D-F-M-T, D-B1, H-K-P-Z, P-R, St, VZ, Wi1} for collective dynamics. Despite of their ubiquity, mathematical modelings for collective behaviors are done only a half century ago by Winfree and Kuramoto in \cite{Ku2, Wi2}. Since their seminal works, several phenomenological consensus(or aggregation) models on manifolds were studied in literature. To name a few, the Kuramoto model on the unit circle \cite{B-C-M, C-H-J-K, C-S, D-X, D-B1, H-K-R, Ku2}, quaternion model \cite{D-F-M-T},  Lohe sphere model \cite{C-C-H, C-H5,J-C,Lo-1, Lo-2, M-T-G,T-M, Zhu},  matrix-valued consensus models \cite{B-C-S, D-F-M, De, H-R2, H-Ry}, the Lohe tensor model \cite{H-R3, H-R4} and the Schr\"{o}dinger-Lohe model \cite{C-H4, H-R1}. We also refer to \cite{A-M-D, F-P-P, F-Z, Ma, S-S, T-A-V} on the related consensus models on Riemannian manifolds).  \newline

In this paper, we are interested in the collective modeling of time-varying algebraic objects which are elements of a group ring \cite{P1, P2}.   Throughout the paper, we take the field $\bbk = \bbr$ or $\bbc$. Then, the group ring $\bbk[G]$ is a $\bbk$-vector space over a finite group $G$, and we consider an ensemble of $N$ time-varying elements ${\mathcal X} = \{ x^i(t) \} \subset \bbk[G]$. Then, we are interested in the following  simple question:

\vspace{0.1cm}

\begin{quote}
`` Can we design a first-order aggregation model for ${\mathcal X}$ which exhibits emergent behaviors? "
\end{quote}

\vspace{0.1cm}

Our main goal of this paper is to answer the aforementioned posed question, i.e., we provide a first-order aggregation model and study how the underlying group $G$ affects system dynamics. Before we present our main results, we briefly discuss elementary operators in the group ring $\bbk[G]$. Let $x$ and $y$ be elements in $\bbk[G]$ represented by
\begin{equation} \label{A-0}
 x=\sum_{g\in G} x_g g \quad \mbox{and} \quad y=\sum_{g\in G} y_g g,
 \end{equation}
where coefficients $x_g$ and $y_g$ are elements of $\bbk$. Then, addition, scalar multiplication, group ring multiplication and hermitian conjugate are defined as follows: for $\lambda \in \bbk$ and $x, y \in \bbk[G]$ in \eqref{A-0},
\begin{align}
\begin{aligned} \label{A-1}
& x+y :=\sum_{g\in G} (x_g+y_g)g, \quad \lambda x := \sum_{g\in G} (\lambda x_g) g, \\
& xy :=\sum_{g',g''\in G} x_{g'}y_{g''}(g' *g''), \quad x^\dagger :=\sum_{g\in G}\overline{x_g} g^{-1},
\end{aligned}
\end{align}
where $g' * g''$ is a group operation of $g'$ and $g''$ in $G$ and $g^{-1}$ is an inverse of $g$. Here $\overline{x_g}$ stands for the complex conjugate of $x_g$. \newline

The main results of this paper are three-fold. First, we propose a first-order aggregation model on $\bbk[G]$ for an ensemble ${\mathcal X}$: 
\begin{equation} \label{A-1-2}
\frac{dx^i}{dt} = \frac{\kappa}{N} \sum_{k=1}^{N} \Big[ x^k(x^i)^\dagger x^i-x^i(x^k)^\dagger x^i  \Big], \quad i = 1, \cdots, N, 
\end{equation}
where $\kappa$ is a nonnegative constant. Then, system \eqref{A-1-2} can also be written as a mean-field form:
\begin{equation}\label{A-2}
\frac{dx^i}{dt} = \kappa \Big[ x^c(x^i)^\dagger x^i-x^i(x^c)^\dagger x^i  \Big ], \quad i = 1, \cdots, N, 
\end{equation}
where $\displaystyle x^c :=\frac{1}{N}\sum_{i=1}^Nx^i$ is the centroid of the ensemble ${\mathcal X}$.

Note that the right-hand side of \eqref{A-2} is well-defined in the group ring $\bbk[G]$ via operations \eqref{A-1}. Other than the elementary operations in \eqref{A-1}, we can also introduce a trace, associated inner product and norm on $\bbk[G]$ as well (see Definition \ref{D2.1} and \eqref{New-0}). Then, by straightforward calculations, we can show that $\|x^i\|_F$ is a constant of motion for \eqref{A-2} (see Lemma \ref{L3.1}):
\[ \|x^i(t) \|_F =  \|x^i(0) \|_F, \quad t \geq 0. \]
On the other hand, system \eqref{A-2} can be rewritten in terms of coefficient $x_g^i \in \bbk$ (see Lemma \ref{L2.2}):
\begin{equation*}\label{A-3}
\frac{dx_g^i}{dt}=\sum_{g_1, g_2\in G}\left(x_{g_1}^c\overline{x^i_{g_2}}x^i_{g_2g_1^{-1}g}-x_{g_1}^i\overline{x^c_{g_2}}x^i_{g_2g_1^{-1}g}\right),\quad \forall g\in G,~~i = 1, \cdots, N.
\end{equation*}
Secondly, we show that the flow $X(t)$ converges to the equilibrium manifold ${\mathcal E}(\bbk[G])$:
\[ \mathcal{E}(\bbk[G]) :=\left\{ X := (x^1, \cdots, x^N) \in \bbk[G]^N:~x^c(x^i)^\dagger x^i -x^i(x^c)^\dagger  x^i  =0_G, \quad \forall~i = 1, \cdots, N \right\}, \]
where $0_G = \sum_{g \in G} 0 g$. 
For the convergence, we introduce a Lyapunov functional which is non-increasing along the flow $X(t)$ and its zero set for orbital derivative coincides with the equilibrium manifold ${\mathcal E}(\bbk[G])$. Then, by the LaSalle invariance principle, one can show that for a positive coupling strength $\kappa > 0$ and general initial data, 
\[
\lim_{t \to \infty} \mathrm{dist}(X(t), {\mathcal E}(\bbk[G])) = 0,
\]
where $\displaystyle \mathrm{dist}(X(t), {\mathcal E}(\bbk[G]))=\inf_{X_{eq} \in{\mathcal E}(\bbk[G])}\|X(t)-X_{eq}\|_F$. We refer to Theorem \ref{T3.1} and Section \ref{sec:3.1} for details. \newline

Thirdly, we provide a complete classification on the structure of  the equilibrium manifold ${\mathcal E}(\bbk[G])$ as a disjoint union of three subsets, and present several quantitative estimates regarding the aforementioned three sets (see Theorem \ref{T4.1}). \newline

The rest of this paper is organized as follows. In Section \ref{sec:2}, we briefly review the basic properties of a group ring $\bbk[G]$ and study basic properties of the aggregation model \eqref{A-2} on the group ring $\bbk[G]$. In Section \ref{sec:3}, we present asymptotic convergence of the flow generated by system \eqref{A-2} toward the equilibrium manifold. In Section \ref{sec:4}, we provide structural results for the structure of the equilibrium manifold ${\mathcal E}(\bbk[G])$ for al finite group $G$. Finally, Section \ref{sec:5} is devoted to a brief summary of our main results and some remaining issues for a future work.  

\section{Preliminaries} \label{sec:2}
\setcounter{equation}{0}
In this section, we first study basic properties of the group ring $\bbk[G]$ and then present basic properties for the aggregation model \eqref{A-1} such as a conservation law and invariance by an automorphism group on $G$. For notational simplicity, we also denote $g_1 * g_2$ by $g_1 g_2$ by deleting $*$.  First, we introduce a concept of a trace in following definition.
\begin{definition} \label{D2.1}
Let $\displaystyle x = \sum_{g \in G} x_g g$ be an element in $\bbk[G]$. Then, the trace of $x$ is given by the coefficient of the identity element $e$:
\[   \mathrm{tr} (x )= x_e. \]
\end{definition}
\begin{remark}
It is easy to see that for $x, y \in \bbk[G]$,
\[ \mathrm{tr}(xy) := \sum_{g^{\prime} *g^{\prime \prime} = e}  x_{g'}y_{g''}. \]
\end{remark}
Next, we list basic properties for hermitian conjugate and trace in the following lemma.
\begin{lemma} \label{L2.1}
Let $x$ and $y$ be elements of the group ring $\bbk[G]$. Then we have
\[
(xy)^\dagger=y^\dagger x^\dagger, \quad \mathrm{tr}(xy)= \mathrm{tr}(yx).
\]
\end{lemma}
\begin{proof} We set 
\[  x = \sum_{g\in G}x_g g \quad \mbox{and} \quad y = \sum_{g\in G}y_g g.  \]
\noindent $\bullet$~(First relation):~By group ring multiplication and hermitian conjugation defined in \eqref{A-1}, we obtain the first identity:
\begin{align*}
\begin{aligned}
(xy)^\dagger &=\left(\sum_{g_1, g_2\in G} x_{g_1}y_{g_2} g_1 g_2 \right)^\dagger=\sum_{g_1,g_2\in G}\overline{x_{g_1}}\overline{y_{g_2}}(g_1 g_2)^{-1} \\
&=\sum_{g_1,g_2\in G}\overline{y_{g_2}}\overline{x_{g_1}}g_2^{-1}  g_1^{-1}  = \Big(  \sum_{g_2\in G} \overline{ y_{g_2}} g_2^{-1} \Big) \Big( \sum_{g_1\in G} \overline{x_{g_1}} g_1^{-1}  \Big) =y^\dagger x^\dagger.
\end{aligned}
\end{align*}

\vspace{0.2cm}

\noindent $\bullet$~(Second relation):~We use \eqref{A-1} and the fact that the left and right inverses are equal for all elements in $G$ to see
\[
\mathrm{tr}(xy) =\sum_{g_1*g_2=e}x_{g_1}y_{g_2}=\sum_{g_2 * g_1=e}y_{g_2}x_{g_1}=\mathrm{tr}(yx).
\]
\end{proof}
In what follows, we study several properties of the aggregation model \eqref{A-2} on $\bbk[G]$. First, we provide other alternative representation of system \eqref{A-2} as a component form. 
\begin{lemma} \label{L2.2}
Let $\{ x^i(t)= \sum_{g\in G} x^i_g(t) g \}$ be a solution to system \eqref{A-2}. Then, for each $g \in G$, the coefficient $x^i_g$ satisfies  
\begin{align}\label{B-1}
\frac{dx_g^i}{dt}= \kappa \sum_{g_1, g_2\in G}\left(x_{g_1}^c\overline{x^i_{g_2}}x^i_{g_2g_1^{-1}g}-x_{g_1}^i\overline{x^c_{g_2}}x^i_{g_2g_1^{-1}g}\right),\quad i = 1, \cdots, N.
\end{align}
\end{lemma}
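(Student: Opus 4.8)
The plan is to expand the right-hand side of the mean-field system \eqref{A-2} using the elementary operations in \eqref{A-1} and then read off the coefficient of each basis element $g \in G$. First I would write the three elements appearing in \eqref{A-2} in coordinate form:
\[
x^c = \sum_{g_1 \in G} x^c_{g_1} g_1, \qquad (x^i)^\dagger = \sum_{g_2 \in G} \overline{x^i_{g_2}}\, g_2^{-1}, \qquad x^i = \sum_{g_3 \in G} x^i_{g_3} g_3,
\]
where the expression for $(x^i)^\dagger$ uses the definition of the hermitian conjugate in \eqref{A-1}.

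Next, applying the group ring multiplication in \eqref{A-1} twice (recall that the product on $\bbk[G]$ is associative, since the group law on $G$ is), I would obtain
\[
x^c (x^i)^\dagger x^i = \sum_{g_1, g_2, g_3 \in G} x^c_{g_1}\, \overline{x^i_{g_2}}\, x^i_{g_3}\,\big( g_1 g_2^{-1} g_3 \big).
\]
To extract the coefficient of a fixed $g \in G$, one imposes the constraint $g_1 g_2^{-1} g_3 = g$ and solves it uniquely as $g_3 = g_2 g_1^{-1} g$; hence the coefficient of $g$ in $x^c (x^i)^\dagger x^i$ is $\sum_{g_1, g_2 \in G} x^c_{g_1}\, \overline{x^i_{g_2}}\, x^i_{g_2 g_1^{-1} g}$. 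Carrying out the identical computation with $x^c$ and $x^i$ interchanged in the first two factors shows that the coefficient of $g$ in $x^i (x^c)^\dagger x^i$ equals $\sum_{g_1, g_2 \in G} x^i_{g_1}\, \overline{x^c_{g_2}}\, x^i_{g_2 g_1^{-1} g}$.

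Finally, I would subtract these two expressions, multiply by $\kappa$, and compare coefficients on both sides of \eqref{A-2}: two elements of $\bbk[G]$ coincide if and only if all their coefficients agree, and the coefficient of $g$ in $\frac{d}{dt} x^i$ is precisely $\frac{d}{dt} x^i_g$. This yields \eqref{B-1}. The argument is entirely computational; the only point demanding care is the index bookkeeping — in particular, tracking the inverse on the middle index produced by the hermitian conjugate and performing the reindexing $g_3 = g_2 g_1^{-1} g$ correctly — so I do not anticipate any genuine obstacle beyond careful manipulation of the sums over $G$.
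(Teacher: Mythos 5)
Your proposal is correct and follows essentially the same route as the paper's proof: expand $x^c(x^i)^\dagger x^i$ and $x^i(x^c)^\dagger x^i$ in coordinates, use the reindexing $g_1 g_2^{-1} g_3 = g \Leftrightarrow g_3 = g_2 g_1^{-1} g$ to read off the coefficient of each $g$, and compare coefficients using that $G$ is a basis of $\bbk[G]$. No gaps.
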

\begin{proof} We substitute the ansatz $x^i=\sum_{g\in G}x^i_g g$ into \eqref{A-2}, and then compare the coefficients of $g$ in the resulting relation to derive the desired system. \newline

\noindent $\bullet$~(L.H.S \eqref{A-2}): Note that 
\begin{equation} \label{B-5}
\frac{dx^i}{dt} =\sum_{g\in G}\left(\frac{dx^i_g}{dt}\right)g.
\end{equation}

\vspace{0.2cm}

\noindent $\bullet$~(R.H.S \eqref{A-2}):  By definition of multiplication and hermitian conjugate, one has
\begin{align}
\begin{aligned} \label{B-6}
x^c(x^i)^\dagger x^i&=\sum_{g_1, g_2, g_3 \in G}x^c_{g_1}\overline{x_{g_2}^i} x^i_{g_3} (g_1g_2^{-1}g_3) =\sum_{g\in G}\left(\sum_{g_1g_2^{-1}g_3=g}x^c_{g_1}\overline{x_{g_2}^i} x^i_{g_3}\right)g
\\
&=\sum_{g\in G}\left(\sum_{g_1, g_2\in G}x_{g_1}^c\overline{x^i_{g_2}}x^i_{g_2g_1^{-1}g}\right)g,
\end{aligned}
\end{align}
where we used the relation $g_1g_2^{-1}g_3=g\quad \Longleftrightarrow\quad g_3=g_2g_1^{-1}g$. \newline

In an exact same manner, we also have 
\begin{equation} \label{B-7}
x^i(x^c)^\dagger x^i  = \sum_{g\in G}\Big(\sum_{g_1, g_2\in G} x_{g_1}^i\overline{x^c_{g_2}}x^i_{g_2g_1^{-1}g} \Big) g.
\end{equation}
In \eqref{A-2}, we collect all the relations \eqref{B-5}, \eqref{B-6} and \eqref{B-7} to obtain
\[
\sum_{g\in G} \left(\frac{dx_g^i}{dt}\right) g= \kappa \sum_{g\in G}\Big[ \sum_{g_1, g_2\in G}\left(x_{g_1}^c\overline{x^i_{g_2}}x^i_{g_2g_1^{-1}g}-x_{g_1}^i\overline{x^c_{g_2}}x^i_{g_2g_1^{-1}g}\right) \Big ] g.
\]
Finally, we use the fact that $G$ is a basis to derive the desired system \eqref{B-1}.
\end{proof}
Now, we are ready to introduce the Frobenius inner product and norm as follows. For $ x, y  \in \bbk[G]$, 
\begin{equation} \label{New-0}
\langle x, y \rangle_F := \mathrm{tr}(x^\dagger y), \quad \|x \|_F := \sqrt{\langle x, x \rangle_F}. 
\end{equation}
Then, it is easy to check that $\langle \cdot, \cdot \rangle_F$ and $\| \cdot \|_F$ satisfies all the axioms for inner product and norm. In particular, one has
\[ \langle x, y \rangle_F = \sum_{g \in G} \overline{x_{g}}y_{g}, \quad \langle x, y \rangle_F =  \overline{\langle y, x \rangle_F}.   \]
Moreover, one can also check 
\begin{equation*} \label{B-8}
\| x + y \|_F \leq \|x \|_F + \| y \|_F \quad \mbox{and} \quad  |\langle x, y \rangle_F| \leq \|x \|_F \cdot \| y \|_F. 
\end{equation*}
Next, we show that the norm $\|x \|_F$ is a constant of motion for \eqref{A-2}. 
\begin{proposition}\label{P2.1}
Let $\{x^i\}$ be the solution to system \eqref{A-2} with initial data satisfying 
\[ \|x^{i}(0) \|_F=1\quad i=1, 2, \cdots, N. \]
Then, one has 
\[  \|x^{i}(t)\|_F=1, \quad t > 0, \quad i = 1, \cdots, N. \]
\end{proposition}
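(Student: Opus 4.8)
The plan is to show that the squared Frobenius norm $\|x^i(t)\|_F^2=\langle x^i(t),x^i(t)\rangle_F$ is a constant of motion, and then read off the conclusion from the initial normalization. Since $\langle\cdot,\cdot\rangle_F$ is conjugate-linear in the first slot and linear in the second (recall $\langle x,y\rangle_F=\sum_{g}\overline{x_g}y_g$) and satisfies $\langle x,y\rangle_F=\overline{\langle y,x\rangle_F}$, differentiating in $t$ yields
\[
\frac{d}{dt}\|x^i\|_F^2=\langle \dot x^i,x^i\rangle_F+\langle x^i,\dot x^i\rangle_F=2\,\mathrm{Re}\,\langle x^i,\dot x^i\rangle_F,
\]
so it suffices to prove $\mathrm{Re}\,\langle x^i,\dot x^i\rangle_F=0$ for each $i$.

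Next I would substitute the right-hand side of \eqref{A-2} and expand via $\langle x,y\rangle_F=\mathrm{tr}(x^\dagger y)$:
\[
\langle x^i,\dot x^i\rangle_F=\kappa\Big[\mathrm{tr}\big((x^i)^\dagger x^c(x^i)^\dagger x^i\big)-\mathrm{tr}\big((x^i)^\dagger x^i(x^c)^\dagger x^i\big)\Big].
\]
Writing $A:=(x^i)^\dagger x^c(x^i)^\dagger x^i$ and using the anti-automorphism property $(uv)^\dagger=v^\dagger u^\dagger$ from Lemma \ref{L2.1} together with the fact that $\dagger$ is an involution (a one-line check from the definition in \eqref{A-1}: $(x^\dagger)^\dagger=x$), one gets $A^\dagger=(x^i)^\dagger x^i(x^c)^\dagger x^i$, so the bracket above is exactly $\mathrm{tr}(A)-\mathrm{tr}(A^\dagger)$.

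The last step uses that the trace reads off the coefficient of the identity and that $\dagger$ conjugates it: $\mathrm{tr}(A^\dagger)=(A^\dagger)_e=\overline{A_{e^{-1}}}=\overline{A_e}=\overline{\mathrm{tr}(A)}$. Hence $\langle x^i,\dot x^i\rangle_F=\kappa\big(\mathrm{tr}(A)-\overline{\mathrm{tr}(A)}\big)$ is purely imaginary, so $\mathrm{Re}\,\langle x^i,\dot x^i\rangle_F=0$ and $\frac{d}{dt}\|x^i\|_F^2=0$; integrating from $t=0$ gives $\|x^i(t)\|_F=\|x^i(0)\|_F=1$. I do not anticipate a genuine obstacle here; the only point needing care is the bookkeeping with $\dagger$ (that it is an involution and that it reverses products), and the same conclusion can alternatively be obtained using only the cyclicity $\mathrm{tr}(xy)=\mathrm{tr}(yx)$ of Lemma \ref{L2.1}, by noting the identity $\langle x^c(x^i)^\dagger x^i,\,x^i\rangle_F=\langle x^i,\,x^i(x^c)^\dagger x^i\rangle_F$ and again taking real parts.
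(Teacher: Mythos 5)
Your argument is correct, but it follows a genuinely different route from the paper's. The paper proves Proposition \ref{P2.1} entirely in coordinates: it invokes the component system \eqref{B-1} from Lemma \ref{L2.2}, writes $\tfrac{d}{dt}\|x^i\|_F^2$ as a triple sum over group elements, and kills it through dummy-index exchanges ($g_1\leftrightarrow g_2$, $g\mapsto g_1g_2^{-1}g$) culminating in the cancellation \eqref{B-13}. You instead argue structurally: $\tfrac{d}{dt}\|x^i\|_F^2 = 2\,\mathrm{Re}\,\langle x^i,\dot x^i\rangle_F$, substitute \eqref{A-2}, and observe that the two trace terms are $\mathrm{tr}(A)$ and $\mathrm{tr}(A^\dagger)$ for $A=(x^i)^\dagger x^c(x^i)^\dagger x^i$, so their difference is purely imaginary since $\mathrm{tr}(A^\dagger)=\overline{\mathrm{tr}(A)}$. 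All the ingredients you use are legitimate: $(xy)^\dagger=y^\dagger x^\dagger$ is Lemma \ref{L2.1}, while $(x^\dagger)^\dagger=x$ and $\mathrm{tr}(x^\dagger)=\overline{\mathrm{tr}(x)}$ are indeed one-line consequences of \eqref{A-1} and Definition \ref{D2.1}, as you note. Your proof is shorter and closer in spirit to the paper's later computation in Lemma \ref{L3.1}, which also works at the level of traces rather than components; the paper's componentwise proof buys nothing extra beyond being self-contained before the inner product machinery is exercised. One small quibble: your closing remark that the alternative identity $\langle x^c(x^i)^\dagger x^i, x^i\rangle_F=\langle x^i, x^i(x^c)^\dagger x^i\rangle_F$ follows ``using only the cyclicity'' is slightly off — converting the left side to a trace already requires the anti-automorphism and involution properties of $\dagger$, so that variant is essentially the same argument repackaged; this does not affect the validity of your main proof.
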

\begin{proof} We use  \eqref{B-1} and \eqref{New-0} to find
\begin{equation} \label{B-9}
\frac{d}{dt}\|x^i\|_F^2 =\frac{d}{dt}\sum_{g\in G} \overline{x_g^i} x_g^i=\sum_{g\in G}\left(\frac{d{x}_g^i}{dt}\right) \overline{x_g^i}+(c.c.),
\end{equation}
where $(c.c.)$ is the complex conjugate of the first term in \eqref{B-9}. \newline

Note that  
\begin{align}
\begin{aligned} \label{B-10}
\sum_{g\in G}\left(\frac{d{x}_g^i}{dt}\right) \bar{x}_g^i &=\sum_{g, g_1, g_2\in G}\left(x_{g_1}^c\overline{x^i_{g_2}}x^i_{g_2g_1^{-1}g}-x_{g_1}^i\overline{x^c_{g_2}}x^i_{g_2g_1^{-1}g}\right)\bar{x}_g^i  \\
& =\sum_{g, g_1, g_2\in G}\left(x_{g_1}^c\overline{x^i_{g_2}}x^i_{g_2g_1^{-1}g}\overline{x_g^i}-x_{g_1}^i\overline{x^c_{g_2}}x^i_{g_2g_1^{-1}g}\overline{x_g^i}\right).
\end{aligned}
\end{align}
Then, we use  \eqref{B-9} and \eqref{B-10} to obtain
\begin{align}
\begin{aligned} \label{B-11}
\frac{d}{dt}\|x^i\|_F^2 &=\sum_{g, g_1, g_2\in G}\left(x_{g_1}^c\overline{x^i_{g_2}}x^i_{g_2g_1^{-1}g}\overline{x_g^i}-x_{g_1}^i\overline{x^c_{g_2}}x^i_{g_2g_1^{-1}g}\overline{x_g^i}\right)+(c.c.)\\
&=\sum_{g, g_1, g_2\in G}\left(x_{g_1}^c\overline{x^i_{g_2}} x^i_{g_2g_1^{-1}g}\overline{x_g^i}-x_{g_1}^i\overline{x^c_{g_2}}x^i_{g_2g_1^{-1}g}\overline{x_g^i}+\overline{x_{g_1}^c} x_{g_2}^i \overline{x^i_{g_2g^{-1}_1g}}x_g^i-\overline{x_{g_1}^i} x_{g_2}^c \overline{x_{g_2g_1^{-1}g}^i} x_g^i\right).
\end{aligned}
\end{align}
Since $g_1, g_2$ and $g$ are dummy variables, one has
\begin{align}
\begin{aligned} \label{B-12}
&\sum_{g, g_1, g_2\in G}\left(x_{g_1}^c\overline{x^i_{g_2}} x^i_{g_2g_1^{-1}g}\overline{x_g^i} -x_{g_1}^i\overline{x^c_{g_2}} x^i_{g_2g_1^{-1}g}\overline{x_g^i} +\overline{x_{g_1}^c}x_{g_2}^i\overline{x_{g_2g^{-1}_1g}^i} x_g^i-\overline{x_{g_1}^i} x_{g_2}^c \overline{x_{g_2g_1^{-1}g}^i} x_g^i\right)\\
& =\sum_{g, g_1, g_2\in G}\left(x_{g_1}^c \overline{x^i_{g_2}} x^i_{g_2g_1^{-1}g}\overline{x_g^i} -x_{g_1}^i \overline{x^c_{g_2}} x^i_{g_2g_1^{-1}g}\overline{x_g^i} +\overline{x_{g_2}^c} x_{g_1}^i\overline{x_{g_1g^{-1}_2g}^i} x_g^i-\overline{x_{g_2}^i} x_{g_1}^c \overline{x_{g_1g_2^{-1}g}^i} x_g^i\right)\\
&=\sum_{g, g_1, g_2\in G}\left(x_{g_1}^c(\overline{x^i_{g_2}} x^i_{g_2g_1^{-1}g}\overline{x_g^i} -\overline{x_{g_2}^i}  \overline{x_{g_1g_2^{-1}g}^i} x_g^i)+\overline{x_{g_2}^c} (x_{g_1}^i\overline{x_{g_1g^{-1}_2g}^i} x_g^i-x_{g_1}^ix^i_{g_2g_1^{-1}g} \overline{x_g^i})\right)\\
&=\sum_{g, g_1, g_2\in G}\left(x_{g_1}^c \overline{x^i}_{g_2}(x^i_{g_2g_1^{-1}g}\overline{x_g^i} -  \overline{x_{g_1g_2^{-1}g}^i} x_g^i)+\overline{x_{g_2}^c} x_{g_1}^i(\overline{x_{g_1g^{-1}_2g}^i} x_g^i-x^i_{g_2g_1^{-1}g}\overline{x_g^i})\right)\\
& =\sum_{g, g_1, g_2\in G}x_{g_1}^c \overline{x^i_{g_2}}\left(x^i_{g_2g_1^{-1}g}\overline{x_g^i} -  \overline{x_{g_1g_2^{-1}g}^i} x_g^i\right)+(c.c.)\\
&=\sum_{g_1, g_2\in G}\left[ x_{g_1}^c \overline{x^i_{g_2}} \sum_{g\in G} \Big (x^i_{g_2g_1^{-1}g}\overline{x_g^i} -  \overline{x_{g_1g_2^{-1}g}^i} x_g^i \Big )\right]+(c.c.),
\end{aligned}
\end{align}
where in the second line, we used index exchange transformation $g_1 \longleftrightarrow g_2$. Now, we use the property of the dummy variables:
\[
\sum_{g\in G} x^i_{g_2g_1^{-1}g}\overline{x_g^i} =\sum_{g\in G}x^i_{g_2g_1^{-1}(g_1g_2^{-1}g)}\overline{x_{g_1g_2^{-1}g}^i}=\sum_{g\in G} x^i_{g}\overline{x^i_{g_1g_2^{-1}g}},
\]
i.e., for all $g_1, g_2\in G$, one has 
\begin{equation} \label{B-13}
\sum_{g\in G} (x^i_{g_2g_1^{-1}g} \overline{x_g^i}-  \overline{x_{g_1g_2^{-1}g}^i} x_g^i)=\sum_{g\in G}( x^i_{g}\overline{x^i_{g_1g_2^{-1}g}}- \overline{x_{g_1g_2^{-1}g}^i} x_g^i)=0.
\end{equation}
Finally, we use \eqref{B-12} and \eqref{B-13} to get the desired estimate.
\end{proof}
As a direct corollary of Proposition \ref{P2.1}, we show that system \eqref{A-2} reduces to the Kuramoto model for identical oscillators as a special case.
\begin{corollary} \label{C2.1}
Let $G = \{e \}$ and $\{x^i \}$ be a solution to system \eqref{A-2} with $\|x^i(0) \|_F = 1$, and we set 
\begin{equation} \label{B-14}
x^i = e^{{\mathrm i} \theta^i}, \quad i = 1, \cdots, N. 
\end{equation}
Then the phase configuration $\{\theta^i \}$ satisfies the Kuramoto model for identical oscillators:
\begin{equation*}
 \frac{d\theta^i}{dt} =\frac{2\kappa}{N}\sum_{k=1}^N\sin(\theta^k-\theta^i), \quad i = 1, \cdots, N.
 \end{equation*}
\end{corollary}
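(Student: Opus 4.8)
The plan is to specialize the component‑form evolution \eqref{B-1} to the trivial group $G=\{e\}$ and then unwind the resulting scalar ODE into the Kuramoto form. First I would observe that when $G=\{e\}$, the only group element is the identity, so an element $x\in\bbk[G]$ is just a complex scalar $x=x_e e$, the product $g_1 * g_2 = e$ forces all the summation indices $g_1,g_2,g$ in \eqref{B-1} to equal $e$, and the hermitian conjugate reduces to complex conjugation. Hence \eqref{B-1} collapses to the single scalar equation $\dot{x}^i = \kappa\big(x^c\,\overline{x^i}\,x^i - x^i\,\overline{x^c}\,x^i\big) = \kappa\big(x^c - \overline{x^c}\,\tfrac{x^i}{\overline{x^i}}\big)|x^i|^2$, or more cleanly, using $x^c = \tfrac1N\sum_k x^k$,
\[
\frac{dx^i}{dt} = \frac{\kappa}{N}\sum_{k=1}^N \big(x^k\overline{x^i} - x^i\overline{x^k}\big)x^i.
\]

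Next, by Proposition \ref{P2.1} (with $\|x^i(0)\|_F = |x^i(0)| = 1$), the modulus is conserved, so the ansatz \eqref{B-14}, $x^i = e^{\mathrm{i}\theta^i}$, is consistent for all $t\ge 0$ and $\theta^i$ is a well‑defined (smooth) real phase. Substituting $x^i = e^{\mathrm{i}\theta^i}$ and $x^k = e^{\mathrm{i}\theta^k}$ into the scalar ODE gives on the left $\mathrm{i}\dot\theta^i e^{\mathrm{i}\theta^i}$, and on the right $\tfrac{\kappa}{N}\sum_k \big(e^{\mathrm{i}(\theta^k-\theta^i)} - e^{-\mathrm{i}(\theta^k-\theta^i)}\big)e^{\mathrm{i}\theta^i} = \tfrac{\kappa}{N}\sum_k 2\mathrm{i}\sin(\theta^k-\theta^i)\,e^{\mathrm{i}\theta^i}$. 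Cancelling the common nonzero factor $\mathrm{i}e^{\mathrm{i}\theta^i}$ yields exactly
\[
\frac{d\theta^i}{dt} = \frac{2\kappa}{N}\sum_{k=1}^N \sin(\theta^k-\theta^i),
\]
which is the claimed Kuramoto model for identical oscillators.

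There is essentially no hard obstacle here; the statement is a direct corollary once Proposition \ref{P2.1} is in hand. The only point requiring a little care is the legitimacy of the phase representation: one should note that $|x^i(t)|\equiv 1$ guarantees $x^i(t)$ stays on the unit circle, so a continuous (hence smooth, by the ODE) lift $\theta^i(t)$ exists, and the relation $\dot{x}^i = \mathrm{i}\dot\theta^i e^{\mathrm{i}\theta^i}$ is valid; dividing by $\mathrm{i}e^{\mathrm{i}\theta^i}\neq 0$ is then harmless. I would present this as two short bullet‑style computations — the reduction of \eqref{B-1} to the scalar ODE, and the substitution of \eqref{B-14} — mirroring the proof style used for Lemma \ref{L2.2} and Proposition \ref{P2.1}.
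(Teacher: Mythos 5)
Your proof is correct and follows essentially the same route as the paper: reduce to the scalar equation on the trivial group, invoke Proposition \ref{P2.1} to justify the unit-modulus ansatz $x^i = e^{\mathrm{i}\theta^i}$, substitute, and cancel the common factor $\mathrm{i}e^{\mathrm{i}\theta^i}$ to obtain the Kuramoto model. The added remark on the existence of a smooth phase lift is a harmless refinement the paper leaves implicit.
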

\begin{proof} 
By Proposition \ref{P2.1},
\[ x^i = x^i_e e,  \quad |x^i_e| = 1, \quad i = 1, \cdots, N, \]
and its component $x^i_e$ satisfies 
\begin{align*}
\begin{aligned} \label{B-13-1}
\frac{dx^i_e}{dt} = \frac{\kappa}{N} \sum_{k=1}^{N} \Big(  x_e^k -  x_e^i \overline{x_e^k}  x_e^i  \Big), \quad i = 1, \cdots, N.
\end{aligned}
\end{align*}
Next, we substitute the ansatz \eqref{B-14} into \eqref{A-2} to get 
\[
e^{\mathrm{i}\theta^i} {\mathrm i} {\dot \theta}^i =\frac{\kappa}{N}\sum_{k=1}^N \Big(e^{\mathrm{i}\theta^k}-e^{\mathrm{i}(2\theta^i-\theta^k)} \Big ).
\]
After further simplifications, one derive the desired Kuramoto model. 
\end{proof}
\vspace{0.5cm}

Next, we show the invariance of system \eqref{A-2} under a group automorphism on $G$. More precisely, let $\varphi\in\mathrm{Aut}(G)$ be a group automorphism. Then, we can define its extension $\tilde{\varphi}:\bbk[G] \to \bbk[G]$ by
\begin{equation*} \label{B-14}
\tilde{\varphi}\left(\sum_{g\in G}x_g g\right):=\sum_{g\in G}x_g\varphi(g)=\sum_{g\in G}x_{\varphi^{-1}(g)}g.
\end{equation*}
Then, it is easy to see 
\[  \tilde{\varphi}(g) = \tilde{\varphi}(1 g) = 1 \varphi(g) = \varphi(g), \quad \forall~g \in G, \quad \mbox{i.e.,} \quad   \tilde{\varphi} \Big|_{G} = \varphi. \]
\begin{lemma} \label{L2.3}
Let $x, y\in\bbk[G]$ and $\varphi\in\mathrm{Aut}(G)$. Then, its extension $\tilde{\varphi}$ satisfies 
\[ \tilde{\varphi}(xy)=\tilde{\varphi}(x) \tilde{\varphi}(y) \quad \mbox{and} \quad \tilde{\varphi}(x^{\dagger}) = \tilde{\varphi}(x)^{\dagger}. \]
\end{lemma}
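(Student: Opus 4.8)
The plan is to reduce both identities to the two defining properties of a group homomorphism, namely $\varphi(g_1 * g_2) = \varphi(g_1) * \varphi(g_2)$ and $\varphi(g^{-1}) = \varphi(g)^{-1}$, combined with the observation that $\tilde{\varphi}$ is $\bbk$-linear because it is specified by its action on the basis $G$. So the first thing I would record is that $\tilde{\varphi}$ is well-defined and $\bbk$-linear: since $\varphi$ permutes $G$, the assignment $\sum_{g\in G} x_g g \mapsto \sum_{g\in G} x_g \varphi(g)$ clearly respects addition and scalar multiplication, and this is exactly what legitimizes the termwise manipulations below. Throughout, write $x = \sum_{g\in G} x_g g$ and $y = \sum_{g\in G} y_g g$.

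For the multiplicativity, I would expand $xy = \sum_{g_1, g_2 \in G} x_{g_1} y_{g_2} (g_1 g_2)$ using \eqref{A-1}, apply $\tilde{\varphi}$ termwise by linearity, and invoke $\varphi(g_1 g_2) = \varphi(g_1)\varphi(g_2)$ to arrive at $\tilde{\varphi}(xy) = \sum_{g_1, g_2 \in G} x_{g_1} y_{g_2}\, \varphi(g_1)\varphi(g_2)$. Independently, I would compute $\tilde{\varphi}(x)\tilde{\varphi}(y) = \bigl(\sum_{g_1\in G} x_{g_1}\varphi(g_1)\bigr)\bigl(\sum_{g_2\in G} y_{g_2}\varphi(g_2)\bigr)$ and expand this group-ring product, again by \eqref{A-1}, into the same double sum $\sum_{g_1, g_2\in G} x_{g_1} y_{g_2}\, \varphi(g_1)\varphi(g_2)$. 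Comparing the two expressions yields $\tilde{\varphi}(xy) = \tilde{\varphi}(x)\tilde{\varphi}(y)$.

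For the compatibility with the hermitian conjugate, I would expand $x^{\dagger} = \sum_{g\in G} \overline{x_g}\, g^{-1}$ from \eqref{A-1}, apply $\tilde{\varphi}$ termwise, and use $\varphi(g^{-1}) = \varphi(g)^{-1}$ to get $\tilde{\varphi}(x^{\dagger}) = \sum_{g\in G} \overline{x_g}\, \varphi(g)^{-1}$. On the other hand, $\tilde{\varphi}(x)^{\dagger} = \bigl(\sum_{g\in G} x_g \varphi(g)\bigr)^{\dagger} = \sum_{g\in G} \overline{x_g}\, \varphi(g)^{-1}$ directly from the definition of $(\cdot)^{\dagger}$. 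The two sides coincide, which gives the second identity.

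I do not anticipate any genuine obstacle here: the content of the lemma is that the basis permutation induced by $\varphi$ is an algebra-and-$\dagger$ homomorphism, and once $\bbk$-linearity of $\tilde{\varphi}$ is noted, each identity is a one-line reindexing check. The only place that warrants a moment's care is making sure the two independent expansions — of $\tilde{\varphi}(xy)$ and of $\tilde{\varphi}(x)\tilde{\varphi}(y)$ — are carried out over the same index set $G \times G$ so that the comparison is literally term-by-term.
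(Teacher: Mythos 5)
Your proof is correct and follows essentially the same route as the paper: expand via the defining operations in \eqref{A-1}, apply $\tilde{\varphi}$ termwise, and use $\varphi(g_1g_2)=\varphi(g_1)\varphi(g_2)$ and $\varphi(g^{-1})=\varphi(g)^{-1}$ to match the two sides. Your explicit remark on the $\bbk$-linearity of $\tilde{\varphi}$ is a small addition the paper leaves implicit, but the argument is otherwise identical.
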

\begin{proof} 
\noindent (i)~We use the relation \eqref{A-1} to obtain
\begin{align*}
\begin{aligned}
\tilde{\varphi}(xy) &=\tilde{\varphi} \Big( \sum_{g',g''\in G} x_{g'}y_{g''}(g' g'') \Big) =  \sum_{g',g''\in G} x_{g'}y_{g''} \varphi(g' g'') = \sum_{g',g''\in G} x_{g'}y_{g''}  \varphi(g^{\prime})  \varphi(g^{\prime \prime})  \\
& = \Big( \sum_{g^{\prime} \in G}x_{g^{\prime}}\varphi(g^{\prime})     \Big)   
\Big(  \sum_{g^{\prime \prime} \in G}x_{g^{\prime \prime}}\varphi(g^{\prime \prime}) \Big) =\tilde{\varphi}(x) \tilde{\varphi}(y).
\end{aligned}
\end{align*}
\vspace{0.2cm}

\noindent (ii)~By definition of hermitian conjugation, we have
\begin{align*}
\begin{aligned}
\tilde{\varphi}(x^{\dagger}) &=  \tilde{\varphi} \Big( \sum_{g \in G} \bar{x_g} g^{-1} \Big) = \sum_{g \in G} \bar{x_g} \varphi(g^{-1}) =   \sum_{g \in G} \overline{x_g} \varphi(g)^{-1} = \Big( \tilde{\varphi} \Big( \sum_{g \in G} x_g g \Big) \Big)^{\dagger} = \tilde{\varphi}(x)^{\dagger}.
\end{aligned}
\end{align*}

\end{proof}
In the following proposition, we show that the aggregation model \eqref{A-2} is invariant under the automorphism $\tilde{\varphi} \in \mbox{Aut}(\bbc[G])$. 
\begin{proposition}
Let ${\mathcal X} := \{ x^i \}$ be a solution to system \eqref{A-2}, and $\varphi \in \mbox{Aut}(G)$. Then $y^i$ defined by 
\[
y^i := \tilde{\varphi}(x^i), \quad i = 1, \cdots, N
\]
satisfies the same system \eqref{A-2}:
\[
\frac{dy^i}{dt} = \kappa \Big[ y^c(y^i)^\dagger y^i-y^i(y^c)^\dagger y^i \Big],
\]
where $y^c := \frac{1}{N} \sum_{i=1}^{N} y^i$. 
\end{proposition}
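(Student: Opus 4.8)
The plan is to exploit the fact, established in Lemma \ref{L2.3}, that the extension $\tilde{\varphi}$ of a group automorphism $\varphi \in \mathrm{Aut}(G)$ is a $\bbk$-algebra homomorphism on $\bbk[G]$ which also commutes with the hermitian conjugate. Since the right-hand side of \eqref{A-2} is built entirely out of addition, scalar multiplication, group ring multiplication, and hermitian conjugation, applying $\tilde{\varphi}$ to both sides of the equation and pushing it through these operations will reproduce \eqref{A-2} with $x^i$ replaced by $y^i = \tilde{\varphi}(x^i)$.

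Concretely, I would proceed as follows. First, observe that $\tilde{\varphi}$ is a linear map, so it commutes with the time derivative: $\frac{d}{dt}\tilde{\varphi}(x^i) = \tilde{\varphi}\bigl(\frac{dx^i}{dt}\bigr)$; this gives $\frac{dy^i}{dt} = \tilde{\varphi}\bigl(\frac{dx^i}{dt}\bigr)$. Second, linearity of $\tilde{\varphi}$ also yields $\tilde{\varphi}(x^c) = \tilde{\varphi}\bigl(\frac1N\sum_{k}x^k\bigr) = \frac1N\sum_k \tilde{\varphi}(x^k) = y^c$, so the centroid is sent to the centroid. Third, apply $\tilde{\varphi}$ to the right-hand side of \eqref{A-2}: using $\tilde{\varphi}(a+b)=\tilde{\varphi}(a)+\tilde{\varphi}(b)$, $\tilde{\varphi}(\lambda a) = \lambda\tilde{\varphi}(a)$ for $\lambda \in \bbk$, and then the two identities of Lemma \ref{L2.3}, we get
\[
\tilde{\varphi}\Bigl(\kappa\bigl[x^c(x^i)^\dagger x^i - x^i(x^c)^\dagger x^i\bigr]\Bigr) = \kappa\Bigl[\tilde{\varphi}(x^c)\tilde{\varphi}(x^i)^\dagger\tilde{\varphi}(x^i) - \tilde{\varphi}(x^i)\tilde{\varphi}(x^c)^\dagger\tilde{\varphi}(x^i)\Bigr] = \kappa\bigl[y^c(y^i)^\dagger y^i - y^i(y^c)^\dagger y^i\bigr].
\]
Combining the three observations gives exactly $\frac{dy^i}{dt} = \kappa\bigl[y^c(y^i)^\dagger y^i - y^i(y^c)^\dagger y^i\bigr]$, which is the claim.

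There is essentially no serious obstacle here; the proof is a direct consequence of Lemma \ref{L2.3} plus linearity. The only point requiring a word of care is the commutation of $\tilde{\varphi}$ with $\frac{d}{dt}$: since $\tilde{\varphi}$ acts on the finite-dimensional vector space $\bbk[G]$ and is, in coordinates, simply a permutation of the coefficients $x_g \mapsto x_{\varphi^{-1}(g)}$ (a fixed linear map independent of $t$), differentiating coordinatewise makes this immediate. One might also remark that this proposition shows $\mathrm{Aut}(G)$ acts on the solution space of \eqref{A-2}, and in particular on the equilibrium manifold $\mathcal{E}(\bbk[G])$, which foreshadows the structural analysis of Section \ref{sec:4}.
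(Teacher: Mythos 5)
Your proposal is correct and follows essentially the same route as the paper: commute $\tilde{\varphi}$ with $\frac{d}{dt}$ by linearity, note $\tilde{\varphi}(x^c)=y^c$, and push $\tilde{\varphi}$ through the right-hand side using Lemma \ref{L2.3}. Your added remarks on the coordinate description of $\tilde{\varphi}$ and the induced action on the solution space are fine but not needed.
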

\begin{proof} Recall $x^i$ satisfies 
\[
\frac{dx^i}{dt} = \kappa \Big [(x^c(x^i)^\dagger x^i-x^i(x^c)^\dagger x^i  \Big ].
\]
Then we use Lemma \ref{L2.3} to see
\begin{align*}
\begin{aligned}
\frac{dy^i}{dt} &=\frac{d}{dt} \tilde{\varphi}(x^i)=\tilde{\varphi}\left(\frac{dx^i}{dt}\right)= \kappa \tilde{\varphi}\left(x^c(x^i)^\dagger x^i-x^i(x^c)^\dagger x^i\right)  \\
&= \kappa \Big[  \tilde{\varphi}(x^c) \tilde{\varphi}(x^i)^{\dagger} \tilde{\varphi}(x^i) -  \tilde{\varphi}(x^i) \tilde{\varphi}(x^c)^{\dagger} \tilde{\varphi}(x^i) \Big ] = \kappa \Big[ y^c(y^i)^\dagger y^i-y^i(y^c)^\dagger y^i \Big ].
\end{aligned}
\end{align*}
\end{proof}
%
%

\section{Large-time behavior of the flow} \label{sec:3}
\setcounter{equation}{0}
In this section, we study asymptotic convergence of \eqref{A-2} toward an equilibrium manifold, and analyze the geometric structure of the equilibrium manifold for the special finite groups $G = \bbz_2$ and $\bbz_3$. \newline

Recall the aggregation model:
\begin{equation}\label{C-0}
\frac{dx^i}{dt} = \kappa \Big[ x^c(x^i)^\dagger x^i-x^i(x^c)^\dagger x^i  \Big], \quad i = 1, \cdots, N.
\end{equation}
Then, we define two sets associated with \eqref{C-0}:
\begin{align}
\begin{aligned} \label{C-1}
\mathcal{E}(\bbk[G]) &:=\left\{ X \in \bbk[G]^N:~x^c(x^i)^\dagger x^i -x^i(x^c)^\dagger  x^i  =0_G, \quad \forall~i = 1, \cdots, N \right\}, \\
\mathcal{M}(\bbk[G]) &:=\left\{ X \in \bbk[G]^N:~x^c(x^i)^\dagger-x^i(x^c)^\dagger=0_G, \quad \forall~i = 1, \cdots, N \right\}.
\end{aligned}
\end{align}
Note that the set $\mathcal{E}(\bbk[G]) $ consists of all equilibria, and the completely aggregated state $X_{eq} = (x, \cdots, x) \in \bbk[G]^N$ lies in the equilibrium set ${\mathcal E}(\bbk[G])$. In this case, since $x^c = x$, 
\[  x^c(x^i)^\dagger x^i -x^i(x^c)^\dagger  x^i  =  x x^{\dagger} x - x x^\dagger x = 0_G. \]
This yields
\[ x^c(x^i)^\dagger x^i -x^i(x^c)^\dagger  x^i = ( x^c(x^i)^\dagger -x^i(x^c)^\dagger) x^i = 0_G, \]
we can easily see 
\begin{equation*} \label{C-1-0}
{\mathcal M}(\bbk[G]) \subset {\mathcal E}(\bbk[G]).
\end{equation*}
The other inclusion is true, thus the above two sets are in fact the same (see Remark \ref{R3.1}).  \newline

\subsection{Asymptotic convergence to ${\mathcal E}(\bbk[G])$} \label{sec:3.1}
In this subsection, we study the asymptotic convergence of the flow generated by system \eqref{A-2} using the LaSalle invariance principle \cite{La}. For this,  we define two nonlinear functionals  measuring the degree of aggregation for the configuration $\mathcal{X} = \{ x^i \}$ with $\|x^i \|_F = 1$:
\begin{equation} \label{C-2}
R^2 := \frac{1}{N^2}\sum_{i, j=1}^N \langle x^i, x^j\rangle_F = \|x^c\|_F^2 \quad \mbox{and} \quad V :=\frac{1}{N^2}\sum_{i,j=1}^N\|x^i-x^j\|_F^2. 
\end{equation}
Then, it is easy to see
\begin{equation} \label{C-3}
0 \leq R \leq 1, \quad V =\frac{1}{N^2}\sum_{i, j=1}^N\Big(2-\langle x^i, x^j\rangle_F -\langle x^j, x^i\rangle_F 
 \Big)=2-2R^2 \geq 0.
\end{equation}
In next lemma, we show that $V$ satisfies a Lyapunov estimate along the flow \eqref{A-2}.
\begin{lemma} \label{L3.1}
Suppose that the coupling strength and initial data satisfy 
\[ \kappa > 0 \quad \mbox{and} \quad \|x^{i}(0) \|_F=1\quad \forall~i=1, 2, \cdots, N, \]
and let $\{x^i\}_{i=1}^N$ be a solution to system \eqref{A-2}. Then, one has 
\begin{equation*} \label{C-3-0}
\frac{dV}{dt} = -\frac{2\kappa}{N}\sum_{i=1}^N\|x^c(x^i)^\dagger-x^i(x^c)^\dagger\|_F^2\leq0.
\end{equation*}
\end{lemma}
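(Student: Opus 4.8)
The plan is to compute $\frac{dV}{dt}$ directly from the relation $V = 2 - 2R^2 = 2 - 2\|x^c\|_F^2$ established in \eqref{C-3}, so that the problem reduces to differentiating $\|x^c\|_F^2 = \langle x^c, x^c \rangle_F$ along the flow. Since $x^c = \frac{1}{N}\sum_i x^i$, linearity of the time derivative gives $\frac{d}{dt}\|x^c\|_F^2 = 2\,\mathrm{Re}\,\langle x^c, \dot{x}^c\rangle_F = \frac{2}{N}\sum_i \mathrm{Re}\,\langle x^c, \dot{x}^i\rangle_F$, and then I substitute the right-hand side of \eqref{A-2} for $\dot{x}^i$. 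This yields
\[
\frac{dV}{dt} = -\frac{4\kappa}{N}\sum_{i=1}^N \mathrm{Re}\,\big\langle x^c,\ x^c(x^i)^\dagger x^i - x^i(x^c)^\dagger x^i\big\rangle_F.
\]

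The key algebraic step is to rewrite each summand using the trace/inner product formalism of Lemma \ref{L2.1} and Definition \ref{D2.1}. Writing $\langle x^c, z\rangle_F = \mathrm{tr}((x^c)^\dagger z)$ and using cyclicity of the trace together with $(ab)^\dagger = b^\dagger a^\dagger$, I expect the expression $\mathrm{tr}\big((x^c)^\dagger(x^c(x^i)^\dagger x^i - x^i(x^c)^\dagger x^i)\big)$ to collapse, after adding its complex conjugate, into $\tfrac12\|x^c(x^i)^\dagger - x^i(x^c)^\dagger\|_F^2$. Concretely, I would set $A_i := x^c(x^i)^\dagger - x^i(x^c)^\dagger$ and compute $\|A_i\|_F^2 = \mathrm{tr}(A_i^\dagger A_i) = \mathrm{tr}\big((x^i(x^c)^\dagger - x^c(x^i)^\dagger)(x^c(x^i)^\dagger - x^i(x^c)^\dagger)\big)$; expanding the four resulting trace terms and applying cyclicity, the cross terms should match (twice) the real part appearing above, with the right sign. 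An alternative, if one prefers to avoid guessing the target identity, is to carry out the computation purely in coefficients via Lemma \ref{L2.2}, mirroring the manipulations already done in the proof of Proposition \ref{P2.1} (the dummy-index exchanges in \eqref{B-12}).

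The main obstacle is bookkeeping rather than conceptual: one must track the Hermitian conjugates and the non-commutativity of group-ring multiplication carefully, since $x^c$ and $x^i$ do not commute in general, and ensure that the "$+\,(c.c.)$" terms are combined correctly so that the final expression is manifestly real and non-positive. Once the identity
\[
\mathrm{Re}\,\big\langle x^c,\ x^c(x^i)^\dagger x^i - x^i(x^c)^\dagger x^i\big\rangle_F = \tfrac12\,\big\|x^c(x^i)^\dagger - x^i(x^c)^\dagger\big\|_F^2
\]
is verified, summing over $i$ and inserting the factor $-\tfrac{4\kappa}{N}$ gives exactly $\frac{dV}{dt} = -\frac{2\kappa}{N}\sum_{i=1}^N\|x^c(x^i)^\dagger - x^i(x^c)^\dagger\|_F^2 \le 0$, which is the claim. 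Note that the normalization $\|x^i(0)\|_F = 1$ (hence $\|x^i(t)\|_F = 1$ by Proposition \ref{P2.1}) is what makes $V = 2 - 2R^2$ valid, so I would invoke that at the outset.
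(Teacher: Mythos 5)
Your proposal is correct and follows essentially the same route as the paper: both reduce the claim to computing $\frac{d}{dt}R^2$ along the flow and then use the product rule, $(xy)^\dagger=y^\dagger x^\dagger$, and trace cyclicity (Lemma \ref{L2.1}) to recognize the result as $\frac{\kappa}{N}\sum_i\|x^c(x^i)^\dagger-x^i(x^c)^\dagger\|_F^2$; the only cosmetic difference is that you differentiate $\|x^c\|_F^2$ directly, whereas the paper differentiates $\sum_{i,j}\langle x^i,x^j\rangle_F$ pairwise and identifies $x^c$ after summing. Your key identity $\mathrm{Re}\,\langle x^c,\ x^c(x^i)^\dagger x^i-x^i(x^c)^\dagger x^i\rangle_F=\tfrac12\|x^c(x^i)^\dagger-x^i(x^c)^\dagger\|_F^2$ does check out by expanding $\mathrm{tr}(A_i^\dagger A_i)$ as you indicate, and your invocation of Proposition \ref{P2.1} for the normalization is the right supporting step.
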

\begin{proof} Since 
\[ \frac{dV}{dt} = -2 \frac{dR^2}{dt} = -\frac{2}{N^2} \frac{d}{dt} \sum_{i,j = 1}^{N} \langle x^i, x^j \rangle_F. \]
Hence it suffices to check   
\begin{equation} \label{C-3-1}
 \frac{d}{dt}\sum_{i, j = 1}^{N} \langle x^i, x^j\rangle_F =  N\kappa\sum_{i=1}^N\|x^c(x^i)^\dagger-x^i(x^c)^\dagger\|_F^2.
\end{equation}
{\it Derivation of \eqref{C-3-1}}: We use \eqref{A-2} and Lemma \ref{L2.1} to find 
\begin{align*}
\begin{aligned}
\frac{d}{dt}\langle x^i, x^j\rangle_F &=\kappa\left\langle \frac{dx^i}{dt}, x^j \right\rangle_F +\kappa\left\langle x^i, \frac{dx^j}{dt} \right\rangle_F \\
&= \kappa\Big \langle x^c (x^i)^\dagger x^i-x^i(x^c)^\dagger x^i, x^j \Big \rangle_F + \kappa\Big \langle x^i, x^c(x^j)^\dagger x^j-x^j(x^c)^\dagger x^j \Big \rangle_F \\
&=\kappa\mathrm{tr}\left((x^i)^\dagger x^i (x^c)^\dagger x^j-(x^i)^\dagger x^c (x^i)^\dagger x^j+(x^i)^\dagger x^c(x^j)^\dagger x^j-(x^i)^\dagger x^j(x^c)^\dagger x^j\right)\\
&=\frac{\kappa}{N}\sum_{k=1}^N \mathrm{tr} \left((x^i)^\dagger x^i (x^k)^\dagger x^j-(x^i)^\dagger x^k (x^i)^\dagger x^j+(x^i)^\dagger x^k(x^j)^\dagger x^j-(x^i)^\dagger x^j(x^k)^\dagger x^j\right).
\end{aligned}
\end{align*}
We sum up the above relation over $i,j$ to derive \eqref{C-3-1}:
\begin{align*}
\begin{aligned}
&\frac{d}{dt}\sum_{i, j =1}^{N} \langle x^i, x^j\rangle_F \\
&\hspace{1cm} =\frac{\kappa}{N}\sum_{i, j, k=1}^N \mathrm{tr}\left((x^i)^\dagger x^i (x^k)^\dagger x^j-(x^i)^\dagger x^k (x^i)^\dagger x^j+(x^i)^\dagger x^k(x^j)^\dagger x^j-(x^i)^\dagger x^j(x^k)^\dagger x^j\right)\\
&\hspace{1cm}=N\kappa\sum_{i=1}^N \mathrm{tr} \left((x^i)^\dagger x^i (x^c)^\dagger x^c-(x^i)^\dagger x^c (x^i)^\dagger x^c+(x^c)^\dagger x^c (x^i)^\dagger x^i-(x^c)^\dagger x^i(x^c)^\dagger x^i\right) \\
&\hspace{1cm}=N\kappa\sum_{i=1}^N \mathrm{tr} \left( x^i (x^c)^\dagger x^c(x^i)^\dagger-x^c (x^i)^\dagger x^c(x^i)^\dagger +x^c (x^i)^\dagger x^i(x^c)^\dagger - x^i(x^c)^\dagger x^i(x^c)^\dagger\right) \\
&\hspace{1cm}=N\kappa\sum_{i=1}^N\left\langle x^c(x^i)^\dagger-x^i(x^c)^\dagger, x^c(x^i)^\dagger-x^i(x^c)^\dagger\right\rangle_F \\
&\hspace{1cm}=N\kappa\sum_{i=1}^N\|x^c(x^i)^\dagger-x^i(x^c)^\dagger\|_F^2.
\end{aligned}
\end{align*}
This and \eqref{C-2} yield  
\begin{equation} \label{C-3-1-1}
\frac{dR^2}{dt} = \frac{\kappa}{N}\sum_{i=1}^N\|x^c(x^i)^\dagger-x^i(x^c)^\dagger\|_F^2\geq0. 
\end{equation}
Again, \eqref{C-3-1-1} and $\eqref{C-3}_2$ imply the desired estimate.  \newline
\end{proof}
\begin{remark} \label{R3.1} As a ramification of Lemma \ref{L3.1}, we obtain the following assertions:
\begin{enumerate}
\item
Since $V$ is nonnegative and non-increasing over time, there exists an asymptotic limit $V_\infty$ such that 
\[ \lim_{t  \to \infty} V(t) = V_\infty. \]
\item
Suppose that ${\mathcal Y} = \{y^i\}_{i=1}^N\in \mathcal{E}(\bbk[G])$ is an equilibrium, i.e., it is stationary. Thus $R$ must be constant and $\frac{dR}{dt}  = 0$. Then, it follows from Lemma \ref{L3.1} that 
\[
0 = \frac{dR^2}{dt} =\frac{\kappa}{N}\sum_{i=1}^N \|y^c(y^i)^\dagger-y^i(y^c)^\dagger\|_F^2,\quad \mbox{i.e.,} \quad {\mathcal Y} \in {\mathcal M}(\bbk[G]). 
\]

Thus, two sets defined in \eqref{C-1} are the same:
\[
{\mathcal E}(\bbk[G]) =  {\mathcal M}(\bbk[G]).
\] 
\end{enumerate}
\end{remark}

\vspace{0.5cm}

Next, we state our first main result on the asymptotic convergence of the flow.
\begin{theorem}\label{T3.1}
Let $\{x^i\}_{i=1}^N$ be a solution to system \eqref{A-2} with the initial data satisfying 
\[
\|x^{i}(0) \|_F=1\quad \forall i=1, 2, \cdots, N.
\]
Then,  the flow approaches to the equilibrium manifold ${\mathcal E}(\bbk[G])$ asymptotically in the sense that 
\[  \lim_{t \to \infty} \mathrm{dist}(X(t), {\mathcal E}(\bbk[G])) = 0. \]
\end{theorem}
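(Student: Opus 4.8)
The plan is to invoke the LaSalle invariance principle with the aggregation functional $V$ from \eqref{C-2} in the role of a Lyapunov function on the compact, positively invariant state space cut out by the conserved Frobenius norms. First I would fix the phase space
\[
\mathcal{P} := \big\{ X = (x^1, \dots, x^N) \in \bbk[G]^N : \ \|x^i\|_F = 1, \ i = 1, \dots, N \big\}.
\]
Since $\bbk[G]$ is a finite-dimensional space equipped with the inner product $\langle \cdot, \cdot \rangle_F$, each constraint $\|x^i\|_F = 1$ describes a Euclidean unit sphere, so $\mathcal{P}$ is compact. Moreover, the vector field on the right-hand side of \eqref{A-2} is a cubic polynomial in the coefficients $x^i_g$, hence smooth, and by Proposition \ref{P2.1} the set $\mathcal{P}$ is invariant under the flow. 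Consequently, for initial data in $\mathcal{P}$ the solution exists globally in time (there is no finite-time blow-up on the compact set $\mathcal{P}$) and $X(t) \in \mathcal{P}$ for all $t \ge 0$.

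Next I would use the Lyapunov estimate of Lemma \ref{L3.1}: along the flow,
\[
\frac{d}{dt} V(X(t)) = -\frac{2\kappa}{N} \sum_{i=1}^N \big\| x^c (x^i)^\dagger - x^i (x^c)^\dagger \big\|_F^2 \le 0
\]
because $\kappa > 0$, so $V : \mathcal{P} \to [0,\infty)$ is continuous and non-increasing along trajectories. The LaSalle invariance principle then shows that the $\omega$-limit set $\omega(X(0))$ of any trajectory is a nonempty, compact, invariant subset of the zero set of the orbital derivative,
\[
\mathcal{Z} := \Big\{ X \in \mathcal{P} : \ \sum_{i=1}^N \big\| x^c (x^i)^\dagger - x^i (x^c)^\dagger \big\|_F^2 = 0 \Big\} = \mathcal{M}(\bbk[G]) \cap \mathcal{P},
\]
where the last identity holds because a finite sum of squared norms vanishes iff every summand does. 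By Remark \ref{R3.1}, $\mathcal{M}(\bbk[G]) = \mathcal{E}(\bbk[G])$; and since every point of $\mathcal{E}(\bbk[G])$ is a genuine equilibrium of \eqref{A-2}, the set $\mathcal{Z}$ is already invariant, hence equal to the largest invariant subset of itself. Therefore $\omega(X(0)) \subset \mathcal{E}(\bbk[G])$.

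Finally, since a bounded trajectory converges to its $\omega$-limit set, $\mathrm{dist}(X(t), \omega(X(0))) \to 0$, and because $\omega(X(0)) \subset \mathcal{E}(\bbk[G])$ we obtain $\mathrm{dist}(X(t), \mathcal{E}(\bbk[G])) \le \mathrm{dist}(X(t), \omega(X(0))) \to 0$ as $t \to \infty$; here the quantity $\mathrm{dist}(X(t), \mathcal{E}(\bbk[G]))$ is well defined because $\mathcal{E}(\bbk[G])$, being the common zero locus of the continuous maps $X \mapsto x^c(x^i)^\dagger x^i - x^i(x^c)^\dagger x^i$, is a closed (hence compact) subset of $\mathcal{P}$. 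The nontrivial computation is the Lyapunov identity, already carried out in Lemma \ref{L3.1}; the remaining work is essentially verifying the hypotheses of LaSalle's theorem (compactness and positive invariance of $\mathcal{P}$, global existence) and matching the orbital-derivative zero set with $\mathcal{E}(\bbk[G])$ via Remark \ref{R3.1}. I expect the only real subtlety to be that last identification together with the closedness of $\mathcal{E}(\bbk[G])$, which is precisely what guarantees that the distance in the statement makes sense and is forced to zero.
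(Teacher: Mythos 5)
Your proposal is correct and follows essentially the same route as the paper: the Lyapunov estimate of Lemma \ref{L3.1} for $V$, the identification of the zero set of the orbital derivative with ${\mathcal M}(\bbk[G]) = {\mathcal E}(\bbk[G])$ via Remark \ref{R3.1}, and the LaSalle invariance principle. You simply make explicit some hypotheses the paper leaves implicit (compactness and positive invariance of the unit-sphere phase space, global existence, closedness of the equilibrium set), which is a welcome but not substantively different elaboration.
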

\begin{proof} 
Let $X(t)$ be a given flow. Then, the bounded Lyapunov functional $V$ satisfies Lyapunov estimate (see Lemma \ref{L3.1}):
\[ \frac{dV}{dt} = -\frac{2\kappa}{N}\sum_{i=1}^N\|x^c(x^i)^\dagger-x^i(x^c)^\dagger\|_F^2\leq0. \]
Then, the zero set of the orbital derivative $\frac{dV}{dt}$ exactly coincides with ${\mathcal M}(\bbk[G])$ which is a subset of equilibrium manifold, (in fact, we will see that they are the same). 
Thus, the largest invariant subset of ${\mathcal M}(\bbk[G])$ is itself, and by LaSalle's invariance principle, we can confirm the asymptotic convergence of the flow $X(t)$ to the set ${\mathcal M}(\bbk[G]) = {\mathcal E}(\bbk[G])$. 
\end{proof}
In the following two subsections, we consider the group ring $\bbr[G]$ over the real number field, and study the structure of the equilibrium set for the following two special cases for $G$:
\[ G = \bbz_2\quad\mbox{or}\quad \bbz_3. \]

\subsection{Geometric structure of ${\mathcal E}(\bbr[\bbz_2])$}   \label{sec:3.2}  In this subsection, we show that ${\mathcal E}(\bbr[\bbz_2])$ can be the whole state space itself: every solution is an equilibrium.
\begin{proposition}\label{P3.1}
The equilibrium set ${\mathcal E}(\bbr[\bbz_2])$ is the whole state space:
\[ {\mathcal E}(\bbr[\bbz_2]) = \bbr[\bbz_2]^N. \]
\end{proposition}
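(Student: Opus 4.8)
The plan is to exploit two special structural features of the group ring $\bbr[\bbz_2]$: it is commutative (because $\bbz_2$ is abelian), and the hermitian conjugate acts as the identity map on it (because the field is $\bbr$ and every element of $\bbz_2$ is its own inverse). Once both facts are in hand, the relation defining $\mathcal{E}(\bbr[\bbz_2])$ collapses identically, and there is essentially nothing left to prove.

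First I would record the structure of $\bbr[\bbz_2]$. Write $\bbz_2 = \{e, a\}$ with $a^2 = e$, so a general element is $x = x_e e + x_a a$ with $x_e, x_a \in \bbr$. Since $\bbz_2$ is abelian, the group ring multiplication in \eqref{A-1} satisfies $xy = yx$ for all $x, y \in \bbr[\bbz_2]$. Next I would verify that $x^\dagger = x$ for every $x \in \bbr[\bbz_2]$: by the definition of hermitian conjugate in \eqref{A-1},
\[
x^\dagger = \overline{x_e}\,e^{-1} + \overline{x_a}\,a^{-1} = x_e e + x_a a = x,
\]
where I used that the coefficients are real (so complex conjugation is trivial) and that $e^{-1} = e$, $a^{-1} = a$.

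With these two facts the verification is immediate. Let $X = (x^1, \ldots, x^N) \in \bbr[\bbz_2]^N$ be arbitrary; its centroid $x^c = \frac1N \sum_k x^k$ again lies in $\bbr[\bbz_2]$, hence also satisfies $(x^c)^\dagger = x^c$. Then for each $i$,
\[
x^c (x^i)^\dagger x^i - x^i (x^c)^\dagger x^i = x^c x^i x^i - x^i x^c x^i = 0_G,
\]
where the first equality uses $(x^i)^\dagger = x^i$ and $(x^c)^\dagger = x^c$, and the second uses commutativity of $\bbr[\bbz_2]$. Hence $X \in \mathcal{E}(\bbr[\bbz_2])$ by the definition in \eqref{C-1}, and since $X$ was arbitrary, $\mathcal{E}(\bbr[\bbz_2]) = \bbr[\bbz_2]^N$.

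There is no genuine obstacle here; the only point that deserves care is the claim that $\dagger$ is the identity on $\bbr[\bbz_2]$, which rests on the coincidence that the field is $\bbr$ rather than $\bbc$ and that every element of $\bbz_2$ is an involution — a coincidence that fails already for $\bbz_3$, which is precisely why the structure of $\mathcal{E}(\bbr[\bbz_3])$ treated in the next subsection is genuinely more intricate. An alternative but more tedious route would be to substitute $G = \bbz_2$ directly into the component form \eqref{B-1} and check that every right-hand side vanishes; the coordinate-free argument above is cleaner and I would prefer it.
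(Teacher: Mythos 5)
Your proof is correct and rests on the same two structural facts the paper uses (every element of $\bbz_2$ is its own inverse plus real coefficients, and commutativity of $\bbr[\bbz_2]$); you merely package them coordinate-free as ``$\dagger$ is the identity,'' whereas the paper argues coefficientwise and in fact verifies the stronger condition $x^c(x^i)^\dagger = x^i(x^c)^\dagger$ defining $\mathcal{M}(\bbr[\bbz_2])$. Either formulation yields $\mathcal{E}(\bbr[\bbz_2]) = \bbr[\bbz_2]^N$, so no gap.
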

\begin{proof} 
We set $G = \bbz_2 = \{ e, a \}$ and recall the equilibrium manifold for \eqref{A-2}:
\begin{equation*} \label{C-3-3}
\mathcal{E}(\bbr[\bbz_2]) :=\left\{ X \in \bbr[\bbz_2]^N:~x^c(x^i)^\dagger-x^i(x^c)^\dagger=0_{\bbz_2}\right\}.
\end{equation*}
Let ${\mathcal X} = \{ x^i \}$ be a solution to \eqref{A-2} on $\bbr(\bbz_2)$. Then, we claim that ${\mathcal X}$ is an equilibrium for \eqref{A-2}: 
\begin{equation} \label{C-3-4}
x^c(x^i)^\dagger-x^i(x^c)^\dagger=0_{\bbz_2} \quad \mbox{i.e.,} \quad \frac{dx^i}{dt} = 0, \quad t > 0, \quad \forall~i=1, 2, \cdots, N.
\end{equation}
{\it Proof of \eqref{C-3-4}}: Note that 
\begin{equation} \label{C-3-5}
g^2 = e \quad \forall~g \in \bbz_2, \quad g_1 g_2 = g_2 g_1, \quad g_i \in \bbz_2.
\end{equation}
Then, we have
\begin{align*}
x^c(x^i)^\dagger-x^i(x^c)^\dagger&=\sum_{g_1, g_2}\left(x^c_{g_1}\overline{x^i_{g_2}}g_1g_2^{-1}-x^i_{g_1}\overline{x^c_{g_2}}g_1g_2^{-1}\right) =\sum_{g_1, g_2}\left(x^c_{g_1}{x}^i_{g_2}g_1g_2 -x^i_{g_2}{x}^c_{g_1}g_2g_1 \right) \\
&=\sum_{g_1, g_2}\left(x^c_{g_1}{x}^i_{g_2}-x^i_{g_2}{x}^c_{g_1}\right)g_1g_2=0_{\bbz_2}.
\end{align*}
Thus, $X$ is an equilibrium.
\end{proof}
\begin{remark} 
Note that the same argument employed in the proof can be applied to the following group: 
\[
\bigoplus_{i=1}^n\bbz_2 \simeq\{\underbrace{(\pm1,\pm1,\cdots,\pm1)}_{n\text{ times}}\}, \quad \forall n\in \mathbb{N}
\]
so that 
\[ {\mathcal E}\left(\bbr\left[\bigoplus_{i=1}^n\bbz_2\right] \right) = \bbr\left[  \bigoplus_{i=1}^n\bbz_2   \right]^N. \]
\end{remark}

\subsection{Geometric structure of ${\mathcal E}(\bbr[\bbz_3])$} \label{sec:3.3} Consider the case $\bbz_3 = \{e, a, a^2 \}$. Then, any element $x \in \bbr[\bbz_3]$ with a unit norm has the unique representation for $x$:
\[ x = x_e e + x_a a + x_{a^2} a^2, \quad x_e^2 + x_a^2 + x_{a^2}^2 = 1. \]
Note that $\bbr[\bbz_3]$ can be embedded into $\bbs^2$: there exists an injective map $\phi$:
\[
\phi: \bbr[\bbz_3]\to  \bbs^2 \subset \bbr^3,\quad \mbox{by}~~\phi(x)=(x_e, x_a, x_{a^2}).
\]
\begin{proposition}\label{P3.2}
The equilibrium manifold ${\mathcal E}(\bbr[\bbz_3])$ can be a disjoint union of the following three submanifolds:
\[
 {\mathcal E}(\bbr[\bbz_3]) = {\mathcal E}_1(\bbr[\bbz_3]) \cup {\mathcal E}_2(\bbr[\bbz_3]) \cup {\mathcal E}_3(\bbr[\bbz_3]), 
\]
where 
\begin{align}
\begin{aligned} \label{C-13}
{\mathcal E}_1(\bbr[\bbz_3]) &:= \{\{x^i\}_{i=1}^N \subset \bbr[\bbz_3]:~ x^c=0_{\bbz_3}\}, \\
{\mathcal E}_2(\bbr[\bbz_3]) &:= \{\{x^i\}_{i=1}^N \subset \bbr[\bbz_3]:~\langle x^i, y\rangle=0\mbox{ for some } y\neq 0_{\bbz_3} \}, \\
{\mathcal E}_3(\bbr[\bbz_3]) &:= \{\{x^i\}_{i=1}^N \subset \bbr[\bbz_3]:~\phi(x^c)  \sslash (1, 1, 1)\}.
\end{aligned}
\end{align}
\end{proposition}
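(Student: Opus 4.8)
The plan is to use the equality $\mathcal{E}(\bbr[\bbz_3]) = \mathcal{M}(\bbr[\bbz_3])$ established in Remark \ref{R3.1}, so that a configuration $X=(x^1,\dots,x^N)$ lies in the equilibrium manifold exactly when
\[
x^c(x^i)^\dagger - x^i(x^c)^\dagger = 0_{\bbz_3}, \qquad i=1,\dots,N .
\]
I would then transport these relations to $\bbr^3$ through the isometric coordinate map $\phi(x)=(x_e,x_a,x_{a^2})$ — recall $\langle x,y\rangle_F = \phi(x)\cdot\phi(y)$ and $(1,1,1)=\phi(e+a+a^2)$ — and read the equilibrium condition as a statement about the scalar triple product with the fixed direction $(1,1,1)$.

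The first (and essentially only computational) step is to expand $x(y)^\dagger$ in coordinates. Over $\bbr$ the dagger merely permutes coefficients, $y^\dagger = y_e e + y_{a^2} a + y_a a^2$, and multiplying out with the $\bbz_3$ multiplication table one finds that the coefficient of $e$ in $x^c(x^i)^\dagger$ equals $\phi(x^c)\cdot\phi(x^i)$, which is symmetric in $x^c,x^i$ and hence cancels in the difference, while the coefficients of $a$ and of $a^2$ in $x^c(x^i)^\dagger - x^i(x^c)^\dagger$ are $\mp\,\bigl(\phi(x^c)\times\phi(x^i)\bigr)\cdot(1,1,1)$. Thus
\[
x^c(x^i)^\dagger - x^i(x^c)^\dagger = \Bigl(\bigl(\phi(x^c)\times\phi(x^i)\bigr)\cdot(1,1,1)\Bigr)\,(a^2-a),
\]
and since $a^2-a\neq 0_{\bbz_3}$ this gives the clean criterion
\[
X\in\mathcal{E}(\bbr[\bbz_3]) \iff \det\!\bigl[\phi(x^c),\ \phi(x^i),\ (1,1,1)\bigr]=0 \ \text{ for every } i,
\]
i.e. $\phi(x^c),\phi(x^i),(1,1,1)$ are coplanar for each $i$.

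The decomposition is then a three-way case split on the location of $\phi(x^c)$ relative to $(1,1,1)$. If $\phi(x^c)=0$ the determinant vanishes for every $x^i$; this is $\mathcal{E}_1(\bbr[\bbz_3])$. If $\phi(x^c)\neq 0$ and $\phi(x^c)\sslash(1,1,1)$, two columns of the determinant are proportional and the condition is again automatic; this is $\mathcal{E}_3(\bbr[\bbz_3])$. In the remaining case $\phi(x^c)\neq 0$ and $\phi(x^c)\not\sslash(1,1,1)$, the span $P:=\mathrm{span}\{\phi(x^c),(1,1,1)\}$ is a genuine plane, vanishing of $\det[\phi(x^c),\phi(x^i),(1,1,1)]$ forces $\phi(x^i)\in P$ for all $i$ (and conversely that puts $\phi(x^c)\in P$ and makes all the determinants vanish), and writing $P=\{v:v\cdot n=0\}$ with $n:=\phi(x^c)\times(1,1,1)\neq 0$ shows this case is precisely $\langle x^i,y\rangle_F=0$ for all $i$ with the single nonzero element $y:=\phi^{-1}(n)$; this is $\mathcal{E}_2(\bbr[\bbz_3])$. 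Read this way the three strata are pairwise disjoint and exhaust $\mathcal{E}(\bbr[\bbz_3])$, which is the claim.

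The main obstacle is simply the disciplined bookkeeping in the coefficient expansion: one must track $a^{-1}=a^2$, $(a^2)^{-1}=a$ and the nine group products accurately to land on the triple-product form — everything afterwards is elementary linear algebra in $\bbr^3$. A secondary point worth stating explicitly in the write-up is the precise reading of the three sets that makes the union \emph{disjoint}, since the literal definitions in \eqref{C-13} can overlap on degenerate configurations (for instance, all $x^i$ orthogonal to a common $y$ with $y\perp(1,1,1)$ and centroid parallel to $(1,1,1)$ lie in both $\mathcal{E}_2$ and $\mathcal{E}_3$); one resolves this by understanding $\mathcal{E}_3$ as ``centroid a nonzero multiple of $e+a+a^2$'' and $\mathcal{E}_2$ as ``centroid nonzero and not such a multiple''.
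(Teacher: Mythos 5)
Your proposal is correct and follows essentially the same route as the paper: expanding $x^c(x^i)^\dagger - x^i(x^c)^\dagger$ in the basis $\{e,a,a^2\}$, observing that the $e$-coefficient cancels and the $a$, $a^2$ coefficients reduce to the triple product $(1,1,1)\cdot(\phi(x^c)\times\phi(x^i))$, and then splitting into the cases $\phi(x^c)=0$, $\phi(x^c)\sslash(1,1,1)$ nonzero, and $(1,1,1)\times\phi(x^c)\neq0$ with $y:=\phi^{-1}\bigl((1,1,1)\times\phi(x^c)\bigr)$ — exactly the paper's Cases A and B. Your closing remark that the three sets as literally written in \eqref{C-13} can overlap, and that disjointness should be read off the trichotomy on the centroid, is a fair and correct refinement of the statement.
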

\begin{proof}
Suppose that $\{x^i\}_{i=1}^N$ is an equilibrium solution:
\begin{equation} \label{C-4}
x^c(x^i)^\dagger = x^i(x^c)^\dagger, \quad i = 1, \cdots, N. 
\end{equation}
Now we set 
\begin{equation} \label{C-5}
 x^c = x_e^c e + x_a^c a + x_{a^2}^c a^2 \quad \mbox{and} \quad  x^i = x_e^i e + x_a^i a + x_{a^2}^i a^2.
\end{equation} 
Then, their hermitian conjugates are
\begin{equation} \label{C-6}
(x^i)^{\dagger} = x_e^i  e + x_{a^2}^i  a + x_a^i  a^2 \quad \mbox{and} \quad (x^c)^{\dagger} = x_e^c e + x_{a^2}^c a +  x_a^c a^2,
\end{equation}
where we used 
\[ e^{-1} = e, \quad a^{-1} = a^2, \quad (a^2)^{-1} = a. \]
In \eqref{C-4}, we use \eqref{C-5} and \eqref{C-6} to find that for each $i = 1, \cdots, N$, 
\begin{align}
\begin{aligned} \label{C-7}
x^c(x^i)^\dagger &=  x_e^c  x_e^i   e*e + x_e^c  x_{a^2}^i e* a + x_e^c x_a^i  e* a^2  + x_a^c x_e^i  a * e + x_a^c  x_{a^2}^i a* a +  x_a^c  x_a^i  a* a^2 \\
&\hspace{0.2cm} + x_{a^2}^c  x_e^i  a^2 * e +  x_{a^2}^c x_{a^2}^i  a^2 * a + x_{a^2}^c  x_a^i a^2 * a^2 \\
&= \Big(   x_e^c  x_e^i  +  x_a^c  x_a^i  +  x_{a^2}^c x_{a^2}^i  \Big) e + \Big( x_e^c  x_{a^2}^i + x_a^c x_e^i  +  x_{a^2}^c  x_a^i  \Big) a \\
&\hspace{0.2cm}+ \Big(  x_e^c x_a^i  +  x_a^c  x_{a^2}^i  +  x_{a^2}^c  x_e^i          \Big) a^2.
\end{aligned}
\end{align}
Similarly, one has 
\begin{align}
\begin{aligned} \label{C-8}
x^i (x^c)^{\dagger} &= \Big( x_e^i x_e^c + x_a^i x_a^c + x_{a^2}^i x_{a^2}^c     \Big) e + \Big(  x_e^i  x_{a^2}^c + x_a^i x_e^c  +  x_{a^2}^i  x_a^c \Big) a \\
&\hspace{0.2cm}+ \Big(  x_e^i  x_a^c +  x_a^i  x_{a^2}^c  + x_{a^2}^i x_e^c \Big) a^2.
\end{aligned}
\end{align}
We compare \eqref{C-7} and \eqref{C-8} to get 
\begin{align}
\begin{aligned}\label{C-9}
e:\quad  x_e^c  x_e^i  +  x_a^c  x_a^i  +  x_{a^2}^c x_{a^2}^i  = x_e^i x_e^c + x_a^i x_a^c + x_{a^2}^i x_{a^2}^c, \\
a:\quad x_e^c  x_{a^2}^i + x_a^c x_e^i  +  x_{a^2}^c  x_a^i  =  x_e^i  x_{a^2}^c + x_a^i x_e^c  +  x_{a^2}^i  x_a^c, \\
a^2:\quad x_e^c x_a^i  +  x_a^c  x_{a^2}^i  +  x_{a^2}^c  x_e^i  = x_e^i  x_a^c +  x_a^i  x_{a^2}^c  + x_{a^2}^i x_e^c.
\end{aligned}
\end{align}
Note that the relation $\eqref{C-9}_1$ trivially true, and if we set 
\[
\phi^i :=\phi(x^i), \quad i.e., \quad  (x_e^i, x_a^i, x_{a^2}^i) = (\phi_1^i, \phi_2^i, \phi_3^i) \in\bbr^3.
\]
Then relations $\eqref{C-9}_2$ and $\eqref{C-9}_3$ reduce to the following single relation:
\[
\phi^c_1\phi^i_3+\phi^c_2\phi^i_1+\phi^c_3\phi^i_2=\phi^i_1\phi^c_3+\phi^i_2\phi^c_1+\phi^i_3\phi^c_2,
\]
which can be rewritten as 
\begin{equation} \label{C-10}
(\phi_2^c\phi_3^i- \phi_3^c\phi^i_2) +(\phi_3^c\phi_1^i- \phi_1^c\phi^i_3) + ( \phi_1^c\phi_2^i - \phi_2^c\phi^i_1) =0.
\end{equation}
We can further simplify the relation \eqref{C-10} as 
\[
(1, 1, 1)\cdot (\phi^c\times \phi^i)=0.
\]
By the vector identity, this yields
\begin{equation} \label{C-11}
\phi^i\cdot((1, 1, 1)\times \phi^c)=0.
\end{equation}
The analysis for the relation \eqref{C-11} yields the complete classification of ${\mathcal E}(\bbr[\bbz_3])$. Now, we consider all cases satisfying the relation \eqref{C-11}. \newline

\noindent $\bullet$~Case A: Suppose 
\[ (1, 1, 1)\times \phi^c = 0. \]
In this case, we have the following two cases:
\[ \mbox{Either}~\phi^c = 0 \quad \mbox{or} \quad 0 \not = \phi^c  \sslash (1,1,1). \]

\vspace{0.2cm}

\noindent $\bullet$~Case B: Suppose
\[ (1, 1, 1)\times \phi^c \not = 0. \]
In this case, if we set $y := (1, 1, 1)\times \phi^c \not = 0$, then
\[  \langle \phi^i, y \rangle = \langle x^i, y \rangle = 0, \quad \mbox{for all $i = 1, \cdots, N$}. \]
Hence, it follows from Case A and Case B that the equilibrium set ${\mathcal E}(\bbr[\bbz_3])$ in \eqref{C-1} can be decomposed into three pieces:
\begin{equation*} \label{C-12}
 {\mathcal E}(\bbr[\bbz_3]) = {\mathcal E}_1(\bbr[\bbz_3]) \cup {\mathcal E}_2(\bbr[\bbz_3]) \cup {\mathcal E}_3(\bbr[\bbz_3]), 
\end{equation*}
where 
\begin{align*}
\begin{aligned} \label{C-13}
{\mathcal E}_1(\bbr[\bbz_3]) &:= \{\{x^i\}_{i=1}^N \subset \bbr[\bbz_3]:~x^c=0_G\}, \\
{\mathcal E}_2(\bbr[\bbz_3]) &:= \{\{x^i\}_{i=1}^N\subset \bbr[\bbz_3]:~\langle x^i, y\rangle=0\mbox{ for some } y\neq 0_G\}, \\
{\mathcal E}_3(\bbr[\bbz_3]) &:= \{\{x^i\}_{i=1}^N\subset \bbr[\bbz_3]:~\phi(x^c)  \sslash (1, 1, 1)\}.
\end{aligned}
\end{align*}
If we visualize the element of $\mathcal{E}_2(\bbr[\bbz_3])$ with embedding $\phi$, $\{\phi(x^i)\}_{i=1}^N$ lies on the same great circle as following figure.
\begin{figure}[h]\label{F1}
\includegraphics[width=7cm]{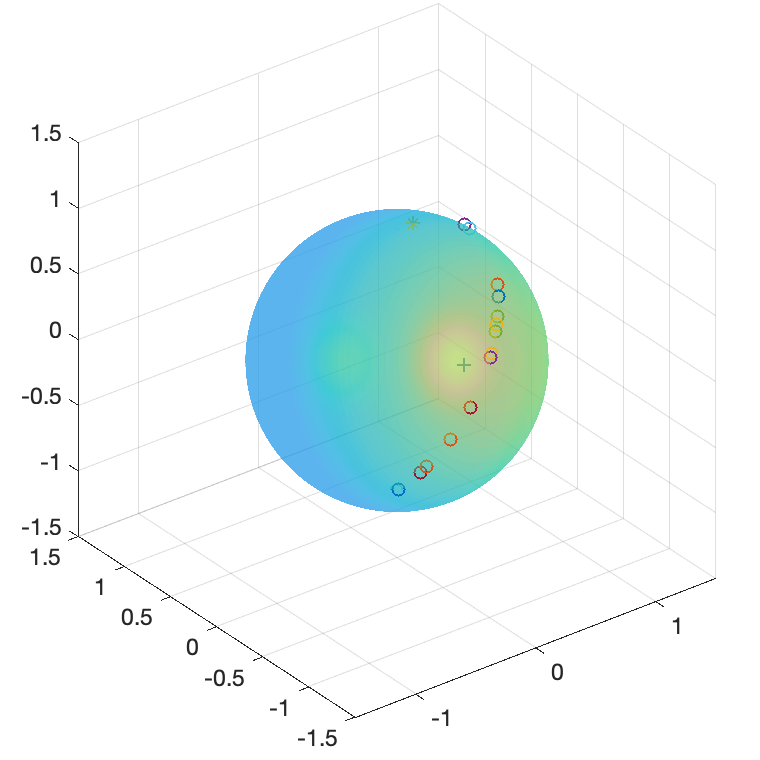}\quad
\includegraphics[width=7cm]{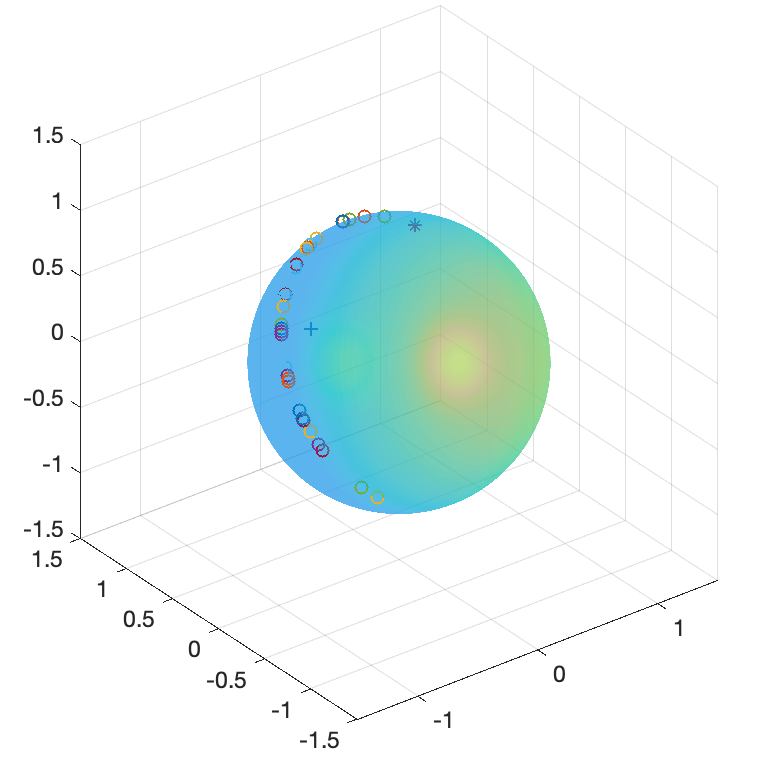}
\caption{Geometric diagrams of $\mathcal{E}_2(\bbr[\bbz_3])$}
\end{figure}
\end{proof}
In the following section, we consider the structure of the equilibrium manifold ${\mathcal E}(\bbr[G])$ for a group ring $\bbr[G]$. 
\section{Geometry of the equilibrium manifold} \label{sec:4}
\setcounter{equation}{0}
In this section, we extend the results for $\bbr[\bbz_3]$ to the equilibrium manifold ${\mathcal E}(\bbr[G])$ for $\bbr[G]$. \newline

Recall the equilibrium set for \eqref{A-2}:
\begin{equation} \label{D-3-0}
\mathcal{E}(\bbr[G])=\left\{ Y  \in \bbr[G]^N:y^c(y^i)^\dagger-y^i(y^c)^\dagger=0_G, \quad i  = 1, \cdots, N~~\right\},
\end{equation}
where $y^c := \frac{1}{N} \sum_{i=1}^{N} y^i$. \newline

\subsection{Geometric structure of an equilibrium manifold} \label{sec:4.1}
To provide an alternative representation of ${\mathcal E}(\bbr[G])$, we introduce an elementary matrix-like object $A^g \in \bbr^{G \times G}$ for each $g \in G$:
\begin{align}\label{D-3}
[A^g]_{g_1,g_2} :=\begin{cases}
1,\quad\text{when } g_1g_2^{-1}=g,\\
0,\quad\text{otherwise}.
\end{cases}
\end{align}
In the following lemma, we study several properties of $A^g$. 
\begin{lemma}\label{L4.1}
For $g \in G$, let $A^g$ be a matrix-like object defined by \eqref{D-3}. Then, it satisfies
\begin{align}
\begin{aligned}\label{D-3-0-0}
 &[A^g]_{g_1,g_2}=[A^{g^{-1}}]_{g_2,g_1}, \quad (A^g)^T=A^{g^{-1}}, \quad A^gA^{\tilde{g}}=A^{g\tilde{g}},\\
 & A^{g_1}\neq A^{g_2} \quad \mbox{if $g_1 \neq g_2$}.
 \end{aligned}
 \end{align}
\end{lemma}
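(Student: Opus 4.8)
The plan is to verify each of the four claimed identities directly from the definition \eqref{D-3}, since each is a short combinatorial computation in the group $G$. First I would establish the entrywise relation $[A^g]_{g_1,g_2}=[A^{g^{-1}}]_{g_2,g_1}$: by definition the left side is $1$ iff $g_1 g_2^{-1}=g$, while the right side is $1$ iff $g_2 g_1^{-1}=g^{-1}$; taking inverses of both sides of $g_1 g_2^{-1}=g$ gives $g_2 g_1^{-1}=g^{-1}$ (using $(g_1 g_2^{-1})^{-1} = g_2 g_1^{-1}$, i.e. Lemma \ref{L2.1}-type inverse reversal), so the two conditions are equivalent. The transpose identity $(A^g)^T = A^{g^{-1}}$ is then just a restatement: $[(A^g)^T]_{g_1,g_2} = [A^g]_{g_2,g_1} = [A^{g^{-1}}]_{g_1,g_2}$ by the entrywise relation just proved.

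Next I would prove the multiplicativity $A^g A^{\tilde g} = A^{g\tilde g}$ by computing the matrix product entrywise:
\[
[A^g A^{\tilde g}]_{g_1,g_2} = \sum_{h \in G} [A^g]_{g_1,h}[A^{\tilde g}]_{h,g_2}.
\]
A term in this sum is nonzero (equal to $1$) exactly when $g_1 h^{-1} = g$ and $h g_2^{-1} = \tilde g$, i.e. when $h = g^{-1} g_1$ and simultaneously $h = \tilde g g_2$. The first equation determines $h$ uniquely, so the sum has at most one nonzero term, and it contributes $1$ precisely when the unique $h = g^{-1}g_1$ also satisfies $h g_2^{-1} = \tilde g$, i.e. $g^{-1} g_1 g_2^{-1} = \tilde g$, equivalently $g_1 g_2^{-1} = g \tilde g$ — which is exactly the condition for $[A^{g\tilde g}]_{g_1,g_2} = 1$. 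Hence the two matrices agree entrywise.

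Finally, for injectivity $g_1 \neq g_2 \implies A^{g_1} \neq A^{g_2}$: evaluate at the index pair $(g_1, e)$. We have $[A^{g_1}]_{g_1,e} = 1$ since $g_1 e^{-1} = g_1$, whereas $[A^{g_2}]_{g_1,e} = 1$ would force $g_1 e^{-1} = g_2$, i.e. $g_1 = g_2$, contrary to assumption; so $[A^{g_2}]_{g_1,e} = 0$ and the matrices differ. None of these steps presents a real obstacle — the only thing to be careful about is the order of multiplication and the inverse-reversal rule when manipulating the defining condition $g_1 g_2^{-1} = g$ (the group need not be abelian), so I would state that rule once at the outset and apply it consistently.
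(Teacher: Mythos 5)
Your proposal is correct and follows essentially the same route as the paper: entrywise verification of the symmetry and transpose relations, the same unique-summand computation for $A^g A^{\tilde g}=A^{g\tilde g}$ (the paper phrases it with Kronecker deltas), and evaluation at a single entry for the last claim. In fact your injectivity step, checking the entry $[A^{g_2}]_{g_1,e}=0$, is stated more carefully than the paper's displayed line, which appears to contain a typographical slip ($[A^{g_2}]_{g_2,e}$ instead of $[A^{g_2}]_{g_1,e}$).
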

\begin{proof}
\noindent $\bullet$~(First identity):~For $g \in G$, we use \eqref{D-3} to see
\[
[A^g]_{g_1,g_2}=1 \quad \Longleftrightarrow \quad g_1g_2^{-1}=g \quad \Longleftrightarrow \quad g_2g_1^{-1}=g^{-1} \quad \Longleftrightarrow \quad [A^{g^{-1}}]_{g_2,g_1}=1.
\]

\vspace{0.2cm}

\noindent $\bullet$~(Second identity):~Now we take a transpose the first relation: for $g_1, g_2 \in G$,  
\[ [(A^g)^T]_{g_1,g_2} = [A^g]_{g_2, g_1} = [A^{g^{-1}}]_{g_1, g_2}, \]
which yields the desired second identity.\newline

\vspace{0.2cm}

\noindent $\bullet$~(Third identity):~It suffices to check
\[ \sum_{g_2\in G}[A^g]_{g_1,g_2}[A^{\tilde{g}}]_{g_2,g_3}=[A^{g\tilde{g}}]_{g_1,g_3}.  \]
Recall Kronecker-delta function defined on a group $G$:
\[
\delta_{g_1, g_2}=\begin{cases}
1,\quad \text{when } g_1=g_2 \in G,\\
0,\quad \text{otherwise}.
\end{cases}
\]
For $g_1, g_3 \in G$, one has 
\begin{equation} \label{D-3-1}
\sum_{g_2\in G}[A^g]_{g_1,g_2}[A^{\tilde{g}}]_{g_2,g_3}=\sum_{g_2\in G}\delta_{g, g_1g_2^{-1}}\delta_{\tilde{g}, g_2g_3^{-1}}.
\end{equation}
On the other hand, we use 
\[
\delta_{g, g_1g_2^{-1}}=\delta_{gg_2, g_1}=\delta_{g_2, g^{-1}g_1}
\]
to find 
\[
g_2=g^{-1}g_1\quad\Longleftrightarrow \quad \delta_{g, g_1g_2^{-1}}=1.
\]
Thus, one has
\begin{equation} \label{D-3-2}
\sum_{g_2\in G}\delta_{g, g_1g_2^{-1}}\delta_{\tilde{g}, g_2g_3^{-1}}=\delta_{\tilde{g}, g_2g_3^{-1}}\Big|_{g_2=g^{-1}g_1}=\delta_{\tilde{g}, g^{-1}g_1g_3^{-1}}=\delta_{g\tilde{g}, g_1g_3^{-1}}=[A^{g\tilde{g}}]_{g_1,g_3}.
\end{equation}
Finally, we combine \eqref{D-3-1} and \eqref{D-3-2} to find the desired estimate. 

\vspace{0.2cm}

\noindent $\bullet$~(Forth identity): Suppose $g_1\neq g_2$. Then, we have
\[
[A^{g_1}]_{g_1, e}=1,\quad [A^{g_2}]_{g_2, e}=0,
\]
which yields $A^{g_1}\neq A^{g_2}$.
\end{proof}
\begin{remark} \label{R4.1}
Since $A^g$ is a matrix-like object with real entries, the hermitian conjugate of $A^g$ is simply the transpose of it:
\[
[(A^g)^\dagger]_{g_1g_2}=[A^g]_{g_2g_1}.
\]
Note that elements of $A^g$ are real, so the transpose of $A^g$ can be defined as above.
\end{remark}
Next, we provide our third set of results in the following theorem. 
\begin{theorem} \label{T4.1}
The following assertions hold.
\begin{enumerate}
\item
The equilibrium manifold \eqref{D-3-0} can be rewritten as 
\[
\mathcal{E}(\bbr[G])=\mathcal{E}_0(\bbr[G]) \sqcup \left(\bigsqcup_{a\in\mathcal{I}}\left(\bigcap_{g\in G}\mathcal{E}^g_{a_g}(\bbr[G]) \right)\right),
\]
where the index set $\mathcal{I}=\{a\in\bbc[G]: a_g=1\text{ or }2\quad\forall g\in G\}$, and 
\begin{align}
\begin{aligned}\label{D-3-2-1}
\mathcal{E}_0(\bbr[G])&:=\left\{ Y \in\bbr[G]^N: y^c=0_G\right\},\\
\mathcal{E}_1^g(\bbr[G])&:=\left\{ Y \in\bbr[G]^N: y^c\left(A^g-A^{g^{-1}}\right)=0_G\right\}\backslash\mathcal{E}_0(\bbr[G]) \\
&=\left\{ Y \in\bbr[G]^N: y^c\in\mathrm{Null}\left(A^g-A^{g^{-1}}\right)\backslash \{0_G\}\right\},\\
\mathcal{E}_2^g(\bbr[G])&:=\left\{ Y \in\bbr[G]^N: y^c\left(A^g-A^{g^{-1}}\right)(y^i)^T=0_G\right\}\backslash (\mathcal{E}_0(\bbr[G]) \cup\mathcal{E}_1^g(\bbr[G])).
\end{aligned}
\end{align}
\item
For $g \in G$, the null space $\mathrm{Null}\left(A^g-A^{g^{-1}}\right)$ appearing in the defining relations of $\mathcal{E}_1^g(\bbr[G])$ and $\mathcal{E}_2^g(\bbr[G])$ in \eqref{D-3-2-1} satisfies 
\begin{align}
\begin{aligned}  \label{New-1}
& \mathrm{Null}\left(A^g-A^{g^{-1}}\right)= \Big \{x\in\bbr[G]: x_{g_1}=x_{g_2}\quad\text{if }~H(g) g_1= H(g) g_2 \Big \}, \\
& \mathrm{Nullity}\left(A^g-A^{g^{-1}}\right)=\frac{|G|}{|H(g)|},
\end{aligned}
\end{align}
where $H(g)$ is a subgroup of $G$ defined by 
\[H(g) := \langle g^2\rangle=\{g^{2n}:n\in\bbz\}. \]
\end{enumerate}
\end{theorem}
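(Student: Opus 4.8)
The plan is to rewrite the defining relation of $\mathcal{E}(\bbr[G])$ as a family of bilinear conditions attached to the objects $A^g$, and then read off both assertions by a short linear-algebra computation together with some bookkeeping.

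\emph{Step 1: matrix form of the equilibrium relation.} First I would record the identity that, for $a=\sum_g a_g g$ and $b=\sum_g b_g g$ in $\bbr[G]$ with coefficient vectors viewed as column vectors indexed by $G$, the coefficient of $g$ in $ab^\dagger$ is the bilinear form $a^{T}A^g b$; this follows at once from $ab^\dagger=\sum_{g_1,g_2}a_{g_1}b_{g_2}\,g_1g_2^{-1}$ and the definition \eqref{D-3}. Using $(A^g)^{T}=A^{g^{-1}}$ from Lemma \ref{L4.1}, the coefficient of $g$ in $ba^\dagger$ is $b^{T}A^g a=a^{T}A^{g^{-1}}b$. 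Hence $y^c(y^i)^\dagger-y^i(y^c)^\dagger=0_G$ holds if and only if $(y^c)^{T}(A^g-A^{g^{-1}})y^i=0$ for every $g\in G$, so that
\[
\mathcal{E}(\bbr[G])=\Big\{Y\in\bbr[G]^N:\ (y^c)^{T}(A^g-A^{g^{-1}})y^i=0\ \ \forall g\in G,\ \forall i\Big\}.
\]
Since $A^g-A^{g^{-1}}$ is antisymmetric, the component $g=e$ (and more generally any $g$ with $g^2=e$) is automatically satisfied, in agreement with Proposition \ref{P3.1}.

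\emph{Step 2: the disjoint decomposition (assertion (1)).} This is then pure bookkeeping on the characterization of Step 1. Given $Y\in\mathcal{E}(\bbr[G])$, either $y^c=0_G$, so $Y\in\mathcal{E}_0(\bbr[G])$, or $y^c\neq 0_G$; in the latter case, for each $g$ I would set $a_g:=1$ if $(y^c)^{T}(A^g-A^{g^{-1}})=0$ — equivalently, by antisymmetry, if $y^c\in\mathrm{Null}(A^g-A^{g^{-1}})$, i.e. $Y\in\mathcal{E}_1^g(\bbr[G])$ — and $a_g:=2$ otherwise, i.e. $Y\in\mathcal{E}_2^g(\bbr[G])$. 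Then $a\in\mathcal{I}$ and $Y\in\bigcap_{g\in G}\mathcal{E}^g_{a_g}(\bbr[G])$. Conversely, $\mathcal{E}_0(\bbr[G])$ and each intersection $\bigcap_{g\in G}\mathcal{E}^g_{a_g}(\bbr[G])$ consist of equilibria, since for each $g$ the $g$-component of the relation in Step 1 holds either because $(y^c)^{T}(A^g-A^{g^{-1}})$ already vanishes (when $a_g=1$) or by the defining relation of $\mathcal{E}_2^g(\bbr[G])$ (when $a_g=2$). Disjointness is forced by the set subtractions in \eqref{D-3-2-1}: $\mathcal{E}_0$ is removed from each $\mathcal{E}_i^g$, and if $a\neq a'$ in $\mathcal{I}$ then $a_g=1,\ a'_g=2$ for some $g$, so one intersection lies in $\mathcal{E}_1^g$ while the other lies in $\mathcal{E}_2^g$, which excludes $\mathcal{E}_1^g$ by construction.

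\emph{Step 3: the null space and its dimension (assertion (2)).} From \eqref{D-3} one has $\sum_{g_2}[A^g]_{g_1,g_2}x_{g_2}=x_{g^{-1}g_1}$ and $\sum_{g_2}[A^{g^{-1}}]_{g_1,g_2}x_{g_2}=x_{gg_1}$, hence $\big((A^g-A^{g^{-1}})x\big)_{g_1}=x_{g^{-1}g_1}-x_{gg_1}$. Therefore $x\in\mathrm{Null}(A^g-A^{g^{-1}})$ if and only if $x_{g^{-1}g_1}=x_{gg_1}$ for all $g_1$, i.e. (putting $g_1=gk$) if and only if $x_k=x_{g^2k}$ for all $k\in G$. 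Iterating, and noting that replacing $k$ by $g^{-2}k$ gives $x_{g^{-2}k}=x_k$ as well, this is equivalent to $x$ being invariant under left translation by every element of $H(g)=\langle g^2\rangle$, that is, to $x$ being constant on the right cosets $H(g)k$ — which is precisely the first line of \eqref{New-1}. Since the space of functions on $G$ constant on the right cosets of $H(g)$ has dimension equal to the number $[G:H(g)]=|G|/|H(g)|$ of such cosets by Lagrange's theorem, the nullity formula follows.

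The computations above are routine; the one place that needs care is the bookkeeping in Step 2 — verifying that the set subtractions in \eqref{D-3-2-1} produce an \emph{honest} disjoint union and that the index set $\mathcal{I}$ is large enough to capture every equilibrium with $y^c\neq 0_G$. The only genuinely structural input is Step 3, where the group operation forces the coset description of $\mathrm{Null}(A^g-A^{g^{-1}})$.
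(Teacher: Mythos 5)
Your proposal is correct and takes essentially the same route as the paper: both reduce the equilibrium relation to the bilinear conditions $(y^c)^{T}\left(A^g-A^{g^{-1}}\right)y^i=0$ for all $g\in G$ via the objects $A^g$, split cases on $y^c$ to obtain the disjoint decomposition indexed by $\mathcal{I}$, and characterize $\mathrm{Null}\left(A^g-A^{g^{-1}}\right)$ through the relation $x_{g_1}=x_{g^2g_1}$, i.e.\ constancy on right cosets of $H(g)=\langle g^2\rangle$, with the nullity following from Lagrange's theorem. The only differences are cosmetic: you use the column-vector convention and invoke $(A^g)^{T}=A^{g^{-1}}$ directly rather than transposing the scalar $y^iA^g(y^c)^T$ as the paper does, and you spell out the disjointness bookkeeping and the antisymmetry identification of left and right null spaces a bit more explicitly.
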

\begin{proof} Since the proof is rather lengthy, we leave its proof in the following two subsections.
\end{proof}

\subsection{Proof of the first assertion} \label{sec:4.2} In this subsection, we verify the proof of the first part of Theorem \ref{T4.1} in two steps. \newline

\noindent $\bullet$~(Step A):~For a fixed $g\in G$, we define the manifold $\mathcal{E}^g(\bbr[G])$ as 
\begin{equation} \label{D-3-2-1-1}
\mathcal{E}^g(\bbr[G]) := \Big \{ Y \in\bbr[G]^N:~y^c A^g(y^i)^T=y^i A^g(y^c)^T,\quad\forall~i = 1, \cdots, N \Big \}.
\end{equation}
Then, we claim:
\begin{equation} \label{D-3-2-2}
\mathcal{E}(\bbr[G])=\bigcap_{g\in G} \mathcal{E}^g(\bbr[G]),
\end{equation}
{\it Proof of \eqref{D-3-2-2}}:~By definition of $A^g$ in \eqref{D-3}, one has 
\[
y^c(y^i)^\dagger =\sum_{g_1, g_2\in G} y^c_{g_1}\overline{y^i_{g_2}} g_1g_2^{-1}=\sum_{g\in G}\left(\sum_{g_1g_2^{-1}=g}  y^c_{g_1}\overline{y^i_{g_2}}\right)g =\sum_{g\in G}\left(\sum_{g_1, g_2\in G} y^c_{g_1}[A^g]_{g_1g_2} \overline{y^i_{g_2}}\right)g.
\]
This yields
\[
\left(y^c(y^i)^\dagger\right)_g=\sum_{g_1, g_2\in G} y^c_{g_1}[A^g]_{g_1,g_2} \overline{y^i_{g_2}}.
\]
By treating $y^i$ and $y^c$ as row vectors, one has
\begin{equation} \label{D-3-3}
y^c A^g(y^i)^\dagger=\left(y^c(y^i)^\dagger\right)_g.
\end{equation}
Similarly, we have
\begin{equation} \label{D-3-4}
y^i A^g(y^c)^\dagger=\left(y^i(y^c)^\dagger\right)_g.
\end{equation}
Therefore, the relations \eqref{D-3-3} and \eqref{D-3-4} yield
\[
y^c(y^i)^\dagger=y^i(y^c)^\dagger\quad\Longleftrightarrow\quad y^c A^g(y^i)^\dagger=y^i A^g(y^c)^\dagger,\quad \forall~g\in G.
\]
Finally, we verify \eqref{D-3-2-2}:
\begin{align}\label{D-4}
\begin{aligned}
\mathcal{E}(\bbr[G]) &=\left\{\{y^i\}\in\bbr[G]^N: y^c A^g(y^i)^\dagger=y^i A^g(y^c)^\dagger,\quad\forall g\in G,~\forall  i = 1, \cdots, N \right\}  \\
&= \bigcap_{g \in G} \left\{\{y^i\}\in\bbr[G]^N: y^c A^g(y^i)^\dagger=y^i A^g(y^c)^\dagger,\quad \forall  i = 1, \cdots, N \right\} \\
& = \displaystyle \bigcap_{g \in G} {\mathcal E}^g(\bbr[G]).
\end{aligned}
\end{align}
Before we proceed further, we give a comment on the representation \eqref{D-4} for system \eqref{A-2} on $\bbr[\bbz_2]$. \newline

Consider system \eqref{A-2} defined on $\bbr[\bbz_2]$. In this case, due to \eqref{C-3-5}, one has
\[ A^g=A^{g^{-1}} \quad \mbox{and} \quad  x^i\in\bbr[\bbz_2] \quad \mbox{for all $ i = 1, \cdots, N$}. \]
Then,  we use the above relations and $x^cA^g(x^i)^T \in \bbr$ to get 
\[
x^cA^g(x^i)^\dagger= x^cA^g(x^i)^T=\left(x^cA^g(x^i)^T\right)^T=x^i \left(A^g\right)^T (x^c)^T = x^i A^{g^{-1}} (x^c)^T =x^iA^g(x^c)^\dagger.
\]
Hence $X \in {\mathcal E}(\bbr[G])$. \newline

\noindent $\bullet$~(Step B):~We further analyze the set ${\mathcal E}^g(\bbr[G])$ as follows.  Suppose that $Y \in {\mathcal E}(\bbr[G])$. Then, one has 
\begin{equation} \label{D-5}
y^c A^g(y^i)^T=y^i A^g(y^c)^T \in \bbr \quad\forall g\in G,\quad\forall i = 1, \cdots, N.
\end{equation}
Since each component is real-valued, we can use a transpose instead of $\dagger$. Then, we use  the conditions \eqref{D-5}  to see
\[
y^c A^g(y^i)^T=y^i A^g(y^c)^T=\left(y^i A^g(y^c)^T\right)^T=y^c (A^{g})^T (y^i)^T.
\]
This yields
\[
y^c\left(A^g-(A^g)^T\right)(y^i)^T=0
\]
for all $g\in G$ and $ i = 1, \cdots, N$. \newline

\noindent It follows from the second identity in Lemma \ref{L4.1} that 
\[
y^c\left(A^g-(A^g)^T\right)(y^i)^T=0\quad \Longrightarrow\quad y^c\left(A^g-A^{g^{-1}}\right)(y^i)^T=0.
\]
Thus, the set ${\mathcal E}^g(\bbr[G])$ in \eqref{D-3-2-1-1} can be rewritten as 
\begin{equation} \label{D-5-0}
\mathcal{E}^g(\bbr[G]) := \Big\{ Y \in\bbr[G]^N:~ y^c\left(A^g-A^{g^{-1}}\right)(y^i)^T=0,\quad\forall~i = 1, \cdots, N  \Big \}.
\end{equation}
Now we can decompose $\mathcal{E}^g(\bbr[G])$ as a disjoint union of three sets:
\begin{equation} \label{D-5-0-0}
\mathcal{E}^g(\bbr[G])=\mathcal{E}_0(\bbr[G])\sqcup \mathcal{E}_1^g(\bbr[G])\sqcup\mathcal{E}_2^g(\bbr[G]).
\end{equation}
Finally, we combine \eqref{D-3-2-2} and \eqref{D-5-0-0} to see 
\[
\mathcal{E}(\bbr[G])=\mathcal{E}_0(\bbr[G])\sqcup \left(\bigsqcup_{a\in\mathcal{I}}\left(\bigcap_{g\in G}\mathcal{E}^g_{a_g}(\bbr[G])\right)\right),
\]
where the index set $\mathcal{I}=\{a\in\bbr(G): a_g=1\text{ or }2\quad\forall g\in G\}$. \newline

\subsection{Proof of the second assertion} \label{sec:4.3}
Our second statement in Theorem \ref{T4.1} is concerned with the structure of $\mathrm{Null}\left(A^g-A^{g^{-1}}\right)$ appearing in the set $\mathcal{E}_1^g(\bbr[G])$ of \eqref{D-3-2-1}.\newline

 Recall the subgroup $H(g)$:
\[H(g):= \langle g^2\rangle=\{g^{2n}:n\in\bbz\}, \]
and we claim: for $g \in G$, 
\begin{align*}
\begin{aligned}
& (i)~\mathrm{Null}\left(A^g-A^{g^{-1}}\right)= \Big \{x\in\bbr(G): x_{g_1}=x_{g_2}\quad\text{for $g_1$ and $g_2$ such that}~H(g) g_1= H(g) g_2 \Big \}. \\
& (ii)~\mathrm{Nullity}\left(A^g-A^{g^{-1}}\right)=\frac{|G|}{|H(g)|}.
\end{aligned}
\end{align*}
\noindent $\bullet$~(Derivation of (i)):~Let $g \in G$ be any, and suppose that $x \in \mathrm{Null}\left(A^g-A^{g^{-1}}\right)$:
\[
\left(A^g-A^{g^{-1}}\right)x=0_G.
\]
Then, $g_1$-th component of above relation is
\begin{equation} \label{D-5-2}
\sum_{g_2\in G}\left[A^g-A^{g^{-1}}\right]_{g_1, g_2}~x_{g_2}=0.
\end{equation}
By definition of $A^g$ in \eqref{D-3}, one has 
\begin{align}
\begin{aligned} \label{D-5-3}
\sum_{g_2\in G}\left[A^g-A^{g^{-1}}\right]_{g_1, g_2}~x_{g_2}&=\sum_{g_2\in G}\left(\delta_{g, g_1g_2^{-1}}-\delta_{g^{-1}, g_1g_2^{-1}}\right)x_{g_2}\\
& =\sum_{g_2\in G}\left(\delta_{g_2, g^{-1}g_1}-\delta_{g_2, gg_1}\right)x_{g_2} =x_{g^{-1}g_1}-x_{gg_1}.
\end{aligned}
\end{align}
It follows from \eqref{D-5-2} and \eqref{D-5-3} that
\[
x_{g^{-1}g_1}=x_{gg_1}\quad\forall g_1\in G.
\]
This yields
\begin{align}\label{D-6}
x_{g_1}=x_{g^2g_1}\quad\forall  g_1\in G.
\end{align}
Hence, we have shown that 
\begin{align}
\begin{aligned} \label{D-6-1}
x \in \mathrm{Null}\left(A^g-A^{g^{-1}}\right) &\quad \Longleftrightarrow \quad x_{g_1}=x_{g^2g_1}\quad\forall  g_1\in G \\
&\quad \Longleftrightarrow \quad x_{g_1}=x_{g^{2k}g_1}\quad\forall  g_1\in G,~k \in \bbz.
\end{aligned}
\end{align} 
On the other hand, note that 
\begin{equation} \label{D-6-2}
H(g) g_1 = H(g) g_2 \quad \Longleftrightarrow\quad g_2 = (g^2)^k g_1 \quad \mbox{for some $k \in \bbz$}. 
\end{equation}
Then, it follows from \eqref{D-6-1} and \eqref{D-6-2} that 
\begin{equation} \label{D-6-3}
x \in \mathrm{Null}\left(A^g-A^{g^{-1}}\right) \quad \Longleftrightarrow \quad x_{g_1} = x_{g_2} \quad \mbox{for $g_1, g_2$ such that $H(g) g_1 = H(g) g_2$}.
\end{equation}
which verifies the first estimate (i). \newline

\noindent $\bullet$~(Derivation of the estimate (ii)): By the relation \eqref{D-6-3}, if $x \in \mathrm{Null}\left(A^g-A^{g^{-1}}\right)$, then the coefficients in the same right coset of $H(g)$ must be the same. Hence, the dimension $M$ of $\mathrm{Null}\left(A^g-A^{g^{-1}}\right)$ must be the same with the number of distinct right cosets of $H(g)$, i.e.,  we can choose a subset $\{\tilde{g}_1,\cdots, \tilde{g}_M\}$ of $G$ with following property:
\[
H\tilde{g}_i\neq H\tilde{g}_j
\]
for all $1\leq i\neq j\leq M$.  Since each right coset has the same cardinality (by Theorem of Lagrange), one has 
\[
M = \frac{|G|}{|H(g)|}.
\]
This verifies the second estimate (ii).

\vspace{0.5cm}

\subsection{Application to the cyclic group} \label{sec:4.4}
In this subsection, we further analyze the detailed structure of an equilibrium set for the cyclic group $G=\bbz_n$.  As a direct corollary of the argument in the proof of the second statement of Theorem \ref{T4.1}, one has the following assertions with the cyclic group $G=\bbz_n=\{\bar{0}, \bar{1}, \cdots, \overline{n-1}\}$.
\begin{corollary} \label{C4.1}
Let $\bbz_n$ be a cyclic group with order $n$.(i.e. $G= \bbz_n$). Then, the following assertions hold.
\begin{enumerate}
\item
If $\bar{m}\in \bbz_n$, then we have
\[
\mathrm{Nullity}\left(A^{\bar{m}}-A^{\bar{m}^{-1}}\right)=\frac{n}{|\langle \overline{2m}\rangle|}=\mathrm{gcd}(2m, n).
\]
\item
If $n = p \geq 3$ and $p$ is a prime, then we have
\[
\mathrm{Nullity}\left(A^g-A^{g^{-1}}\right)=\begin{cases}
p,\quad \text{when } g=\bar{0},\\
1,\quad \text{otherwise}.
\end{cases}
\]
\end{enumerate}
\end{corollary}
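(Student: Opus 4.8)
The plan is to deduce Corollary \ref{C4.1} as a direct specialization of the second assertion in Theorem \ref{T4.1}, so the work is almost entirely bookkeeping about the cyclic group $\bbz_n$. First I would recall that in additive notation for $\bbz_n$, the subgroup $H(\bar{m})$ from Theorem \ref{T4.1} is $H(\bar m) = \langle 2\bar m\rangle = \langle \overline{2m}\rangle$, and by the second estimate \eqref{New-1} we have
\[
\mathrm{Nullity}\left(A^{\bar m}-A^{\bar m^{-1}}\right) = \frac{|\bbz_n|}{|\langle \overline{2m}\rangle|} = \frac{n}{|\langle \overline{2m}\rangle|}.
\]
So the first assertion reduces to the elementary number-theoretic identity $n / |\langle \overline{2m}\rangle| = \gcd(2m,n)$. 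I would prove this by the standard fact that in $\bbz_n$ the cyclic subgroup generated by an element $\bar k$ has order $n/\gcd(k,n)$; applying this with $k = 2m$ gives $|\langle \overline{2m}\rangle| = n/\gcd(2m,n)$, and substituting yields the claim.

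For the second assertion, I would set $n = p$ with $p \geq 3$ prime and simply evaluate $\gcd(2m, p)$ for $g = \bar m$ ranging over $\bbz_p$. When $m \equiv 0$, i.e. $g = \bar 0$, we get $\gcd(0, p) = p$, so the nullity is $p$ (consistently, $A^{\bar 0} - A^{\bar 0^{-1}} = A^{\bar 0} - A^{\bar 0} = 0$, the zero matrix of size $p$). When $\bar m \neq \bar 0$, since $p$ is prime and $1 \leq m \leq p-1$ we have $\gcd(m,p) = 1$; moreover $p$ is odd, so $\gcd(2,p) = 1$ as well, hence $\gcd(2m, p) = 1$ and the nullity is $1$. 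This gives exactly the stated case distinction.

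The only genuinely delicate point — and the one I would be most careful about — is the role of the hypothesis $p \geq 3$ (equivalently $p$ odd): the argument $\gcd(2m,p) = 1$ for $m \not\equiv 0$ uses that $2$ is invertible mod $p$, which fails for $p = 2$. This dovetails with the earlier observation (Proposition \ref{P3.1} and the Remark after it) that for $\bbz_2$ the whole space is an equilibrium, so the generic picture is genuinely different there; I would add a sentence flagging that the $p=3$ case recovers the three-piece decomposition of Proposition \ref{P3.2}, since then $\mathrm{Null}(A^g - A^{g^{-1}})$ is one-dimensional for $g \neq \bar 0$, matching the line spanned by $(1,1,1)$ that appears in ${\mathcal E}_3(\bbr[\bbz_3])$. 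Beyond that, everything is routine and no real obstacle is expected; the proof is essentially two applications of the order formula for elements of a cyclic group together with primality.
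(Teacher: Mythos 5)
Your proposal is correct and follows exactly the route the paper intends: Corollary \ref{C4.1} is stated as a direct consequence of the nullity formula $\mathrm{Nullity}(A^g-A^{g^{-1}})=|G|/|H(g)|$ with $H(g)=\langle g^2\rangle$ from Theorem \ref{T4.1}, and your specialization $H(\bar m)=\langle\overline{2m}\rangle$ together with the order formula $|\langle \bar k\rangle|=n/\gcd(k,n)$ and the evaluation of $\gcd(2m,p)$ for odd prime $p$ is precisely the bookkeeping the paper leaves implicit. No gaps; your remark on why $p\geq 3$ (oddness of $p$) is needed is an accurate and worthwhile observation.
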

For a later use, we also present the following useful lemma.
\begin{lemma}\label{L4.2}
Let $G$ be a finite group. Then, for $g \in G$ and $n \in \bbz$, one has
\begin{equation} \label{D-7}
\mathrm{Null}\left(A^g-A^{g^{-1}}\right)\subseteq \mathrm{Null}\left(A^{g^n}-A^{g^{-n}}\right).
\end{equation}
\end{lemma}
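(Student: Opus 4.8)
The plan is to reduce the claimed inclusion of null spaces to the coset-characterization of $\mathrm{Null}(A^g - A^{g^{-1}})$ already established in the proof of the second assertion of Theorem \ref{T4.1}. Recall from \eqref{D-6-3} that $x \in \mathrm{Null}(A^g - A^{g^{-1}})$ if and only if $x_{g_1} = x_{g_2}$ whenever $H(g) g_1 = H(g) g_2$, where $H(g) = \langle g^2 \rangle$; equivalently, by \eqref{D-6-1}, if and only if $x_{g_1} = x_{g^2 g_1}$ for all $g_1 \in G$. Applying the same characterization with $g$ replaced by $g^n$, we get that $x \in \mathrm{Null}(A^{g^n} - A^{g^{-n}})$ if and only if $x_{g_1} = x_{g^{2n} g_1}$ for all $g_1 \in G$.

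So the key step is purely group-theoretic: I would show $H(g^n) \subseteq H(g)$, i.e. $\langle (g^n)^2 \rangle \subseteq \langle g^2 \rangle$. This is immediate since $(g^n)^2 = g^{2n} = (g^2)^n \in \langle g^2 \rangle$, hence every power of $g^{2n}$ lies in $\langle g^2 \rangle$. Consequently, for any $g_1, g_2 \in G$, the condition $H(g) g_1 = H(g) g_2$ implies $H(g^n) g_1 = H(g^n) g_2$ (a coarser partition of $G$ into right cosets of the larger subgroup $H(g)$ refines the partition into right cosets of the smaller subgroup $H(g^n)$ — more precisely, $g_1 H(g)^{-1}$... let me state it cleanly: if $g_2 \in H(g) g_1$ then $g_2 = (g^2)^k g_1$ for some $k$, and since $(g^2)^k = ((g^n)^2)^{?}$ need not hold directly, I instead argue at the level of the defining relation).

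Concretely, the cleanest route avoids reasoning about which partition refines which: if $x$ satisfies $x_{g_1} = x_{g^2 g_1}$ for all $g_1 \in G$, then iterating $n$ times gives $x_{g_1} = x_{g^{2n} g_1}$ for all $g_1 \in G$, which by the characterization applied to $g^n$ means exactly $x \in \mathrm{Null}(A^{g^n} - A^{g^{-n}})$. This proves \eqref{D-7}. I do not anticipate a genuine obstacle here; the only point requiring minor care is the case of negative $n$, which is handled by noting $A^{g^{-1}} = (A^g)^T$ (second identity of Lemma \ref{L4.1}), so $A^{g^n} - A^{g^{-n}}$ is unchanged up to sign/transpose under $n \mapsto -n$ and hence has the same null space; alternatively, iterating the relation $x_{g_1} = x_{g^2 g_1}$ backwards (it is equivalent to $x_{g^{-2} g_1} = x_{g_1}$) covers negative exponents directly.
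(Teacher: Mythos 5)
Your final argument is correct, but it takes a genuinely different route from the paper. You prove the inclusion by passing through the componentwise characterization of the null space obtained in the proof of the second assertion of Theorem \ref{T4.1} (namely \eqref{D-6-1}: $x\in\mathrm{Null}(A^g-A^{g^{-1}})$ iff $x_{g_1}=x_{g^{2}g_1}$ for all $g_1$, hence iff $x_{g_1}=x_{g^{2k}g_1}$ for all $k\in\bbz$), and then simply iterate this relation (forwards and backwards for negative $n$) to land in the corresponding condition for $g^n$. The paper instead argues purely at the level of the matrix-like objects: it uses the multiplicativity $A^{g}A^{\tilde g}=A^{g\tilde g}$ from Lemma \ref{L4.1} to factor $A^{g^n}-A^{g^{-n}}$ as $(A^g-A^{g^{-1}})$ times a sum of $A^{g^k}$'s (a difference-of-powers/telescoping identity), and then right-multiplies the relation $x(A^g-A^{g^{-1}})=0_G$ by that sum, after splitting into the cases $n=0$, $n=1$, $n\ge 2$. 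Your route is more elementary and handles all $n\in\bbz$ uniformly once \eqref{D-6-1} is in hand; the paper's route is self-contained in the algebra of the $A^g$ and does not invoke the null-space description at all. Two small cautions: (i) the detour in your middle paragraph about cosets states the implication in the wrong direction (what you need is that $H(g^n)\subseteq H(g)$, so each right coset of $H(g^n)$ sits inside a right coset of $H(g)$, i.e.\ $H(g^n)g_1=H(g^n)g_2$ forces $H(g)g_1=H(g)g_2$, not the converse) — you correctly abandon this and the iteration argument that replaces it is sound; (ii) the paper's Lemma \ref{L4.2} proof works with the left relation $x(A^g-A^{g^{-1}})=0_G$ while the characterization \eqref{D-6-1} is derived from $(A^g-A^{g^{-1}})x=0_G$; these agree because $(A^g-A^{g^{-1}})^T=A^{g^{-1}}-A^{g}$, so the matrix is antisymmetric and its left and right null spaces coincide, a point worth one line if you use \eqref{D-6-1} as the interface.
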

\begin{proof}
By symmetry in \eqref{D-7}, it suffices to consider the case when $n\geq0$. \newline

\noindent $\bullet$~Case A ($n=0$): In this case, the estimate \eqref{D-7} is trivially true since 
\[  A^{g^0}-A^{g^0}=O \quad \mbox{and} \quad \mathrm{Null}\left(A^{g^0}-A^{g^{0}}\right) = \bbr[G]. \]

\vspace{0.2cm}

\noindent $\bullet$~Case B ($n=1$):  Again, this relation is tautology.   \newline

\noindent $\bullet$~Case C ($n \geq 2$): it follows from $\eqref{D-3-0-0}_3$ in Lemma \ref{L4.1} (ii) that 
\[
\left(A^g-A^{g^{-1}}\right)\left(A^{g^{n-1}}+\cdots+A^{g}+A^{g^0}+A^{g^{-1}}+\cdots+A^{g^{-(n-1)}}\right)=A^{g^n}-A^{g^{-n}}.
\]
If $x\left(A^g-A^{g^{-1}}\right)=0_G$, 
\begin{align*}
\begin{aligned}
x \Big(A^{g^n}-A^{g^{-n}} \Big) &= x\left(A^g-A^{g^{-1}}\right)\left(A^{g^{n-1}}+\cdots+A^{g}+A^{g^0}+A^{g^{-1}}+\cdots+A^{g^{-(n-1)}}\right) \\
&= 0_G. 
\end{aligned}
\end{align*}
This implies the desired relation.
\end{proof}
Next, we further investigate the structure of the equilibrium manifold for the cyclic group $\bbz_p$ with a prime order $p\geq3$. It follows from Lemma \ref{L4.2} that one has the following corollary. 
\begin{proposition} \label{P4.1}
Let $\bbz_p$ be the cyclic group with prime order $p\geq3$. Then, for all $g, \tilde{g}\in\bbz_p\backslash\{e\}$,
\[
\mathrm{Null}\left(A^g-A^{g^{-1}}\right)=\mathrm{Null}\left(A^{\tilde{g}}-A^{\tilde{g}^{-1}}\right).
\]
\end{proposition}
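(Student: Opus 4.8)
The plan is to exploit the fact that $\bbz_p$ has prime order, so that \emph{every} non-identity element is a generator. Concretely, fix $g, \tilde{g}\in\bbz_p\backslash\{e\}$. Since $\langle g\rangle$ is a nontrivial subgroup of $\bbz_p$ and $|\bbz_p|=p$ is prime, Lagrange's theorem forces $\langle g\rangle=\bbz_p$; hence there exists $n\in\bbz$ with $\tilde{g}=g^{n}$, and automatically $\tilde{g}^{-1}=g^{-n}$.

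The core of the argument is then just two applications of Lemma \ref{L4.2}. First, applying \eqref{D-7} with exponent $n$ gives
\[
\mathrm{Null}\left(A^g-A^{g^{-1}}\right)\subseteq \mathrm{Null}\left(A^{g^{n}}-A^{g^{-n}}\right)=\mathrm{Null}\left(A^{\tilde{g}}-A^{\tilde{g}^{-1}}\right).
\]
For the reverse inclusion, I would run the same reasoning with the roles of $g$ and $\tilde{g}$ exchanged: since $\tilde{g}$ also generates $\bbz_p$, there is $m\in\bbz$ with $g=\tilde{g}^{m}$, and Lemma \ref{L4.2} yields $\mathrm{Null}(A^{\tilde{g}}-A^{\tilde{g}^{-1}})\subseteq\mathrm{Null}(A^{g}-A^{g^{-1}})$. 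Combining the two inclusions gives the claimed equality. (Alternatively, one could invoke Corollary \ref{C4.1}(2), which shows both null spaces are one-dimensional, so a single inclusion already forces equality; but the symmetric two-inclusion route avoids relying on the dimension count.)

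There is no serious obstacle here: the statement is essentially a corollary of Lemma \ref{L4.2} once one observes the generator property. The only point requiring a moment's care is making explicit that primality of $p$ is exactly what guarantees $\tilde{g}$ lies in the cyclic group generated by $g$ (this fails for composite $n$, e.g.\ $g,\tilde{g}$ in different proper subgroups), so the hypothesis $p$ prime is used precisely at that step.
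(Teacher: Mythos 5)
Your proposal is correct and matches the paper's proof essentially verbatim: both arguments use that every non-identity element of $\bbz_p$ generates the group (the only place primality enters), write $\tilde{g}=g^{n}$ and $g=\tilde{g}^{m}$, and then apply Lemma \ref{L4.2} twice to obtain the two inclusions. No further comment is needed.
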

\begin{proof}
Let $g, \tilde{g}\in\bbz_p\backslash\{e\}$. By the property of $\bbz_p$, $g$ and $\tilde{g}$ are generators of $\bbz_p$. This means there exist two integer $n, \tilde{n}$ such that 
\[
g^n=\tilde{g},\quad \tilde{g}^{\tilde{n}}=g.
\]
Now we apply Lemma \ref{L4.2} to obtain
\[
\mathrm{Null}\left(A^g-A^{g^{-1}}\right)\subseteq \mathrm{Null}\left(A^{\tilde{g}}-A^{\tilde{g}^{-1}}\right),\quad \mathrm{Null}\left(A^{\tilde{g}}-A^{\tilde{g}^{-1}}\right)\subseteq \mathrm{Null}\left(A^{g}-A^{g^{-1}}\right).
\]
This yields the desired estimate.
\end{proof}
\begin{remark}\label{R4.2}
By $\eqref{New-1}_2$, we can also find the explicit form of the null space of $A^g-A^{g^{-1}}$:
\[
\mathrm{Null}\left(A^g-A^{g^{-1}}\right)=\begin{cases}
\bbr(G),\quad\text{if } g=e,\\
\left\{x\in\bbr(G): x_{\tilde{g}}=a\in\bbr\quad\forall~ \tilde{g}\in G\right\},\quad\text{otherwise}.
\end{cases}
\]
Since $\mathrm{Null}\left(A^g-A^{g^{-1}}\right)$ does not depends on $g\in G$, $\mathcal{E}^g_1(\bbr[G])$ defined in \eqref{D-3-2-1} also does not depends on $g\in G$. This implies that, if $\{y^i\}\in\mathcal{E}_1^{g}(\bbr[G])$ for $g\neq e$, then it is easy to see that ${\mathcal Y} := \{y^i\}\in\mathcal{E}_1^{\tilde{g}}(\bbr[G])$ for all $\tilde{g}\in G\backslash\{e\}$.
\end{remark}

\vspace{0.5cm}

Based on \eqref{C-1} and Remark \ref{R4.2}, we define the following sets:
\begin{align*}
\begin{aligned}
\mathcal{L}&:=\left\{ Y \in\bbr[G]^N:~y^c\in\mathrm{Null}\left(A^g-A^{g^{-1}}\right)\quad\forall g\in G\right\},\\
\mathcal{C}&:=\bigcap_{g\in G}\left\{\{ Y \in\bbr[G]^N:~y^c\left(A^g-A^{g^{-1}}\right)(y^i)^T=0\right\}\backslash \mathcal{L}.
\end{aligned}
\end{align*}
\begin{proposition}\label{P4.2}
Let  $\{y^i \}\in\mathcal{C}$. Then, the following assertions hold.
\begin{enumerate}
\item
There exists $(|G|-1)$-dimensional hyperplane containing $\{y^i\}$.
\item
There exists a $(|G|-2)$-dimensional sphere centered at $0_G$ containing $\{y^i\}$. 
\end{enumerate}
\end{proposition}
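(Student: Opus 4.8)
The plan is to read off both assertions from one observation: membership in $\mathcal{C}$ forces every $y^i$ to be orthogonal to a single fixed nonzero element of $\bbr[G]$ built from $y^c$, so that all the $y^i$ lie on a common hyperplane through $0_G$ and, having unit norm, on the subsphere that hyperplane cuts out. This is the higher-dimensional analogue of the great-circle picture for $\mathcal{E}_2(\bbr[\bbz_3])$ in Proposition \ref{P3.2}.

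First I would, for each $g\in G$, introduce the row vector $v^g:=y^c\left(A^g-A^{g^{-1}}\right)\in\bbr[G]$, treating $y^c$ as a row vector as in Section \ref{sec:4.2}. The defining relation of $\mathcal{C}$ says precisely that $v^g(y^i)^T=\langle v^g,y^i\rangle_F=0$ for every $g\in G$ and every $i=1,\dots,N$. The next step is to exhibit one value $g_0$ with $v^{g_0}\neq0_G$: since $Y=\{y^i\}\notin\mathcal{L}$, there is $g_0\in G$ with $y^c\notin\mathrm{Null}\left(A^{g_0}-A^{g_0^{-1}}\right)$, and because $\left(A^{g_0}\right)^T=A^{g_0^{-1}}$ by the second identity of Lemma \ref{L4.1}, the matrix $A^{g_0}-A^{g_0^{-1}}$ is skew-symmetric, so its left null space coincides with its null space; hence $v^{g_0}=y^c\left(A^{g_0}-A^{g_0^{-1}}\right)\neq0_G$. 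This equivalence is exactly the one already invoked in the two descriptions of $\mathcal{E}_1^g(\bbr[G])$ in \eqref{D-3-2-1}.

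For assertion (1) I would then set $W:=\left\{x\in\bbr[G]:\langle v^{g_0},x\rangle_F=0\right\}$. Being the kernel of a nonzero linear functional on $\bbr[G]\cong\bbr^{|G|}$, $W$ is a hyperplane through $0_G$ of dimension $|G|-1$, and by the first step it contains $y^1,\dots,y^N$. For assertion (2) I would use that $\|y^i\|_F$ is conserved along the flow and equals its normalized initial value, so $\|y^i\|_F=1$ for all $i$ (Proposition \ref{P2.1}); then each $y^i$ also lies on the unit sphere $\left\{x\in\bbr[G]:\|x\|_F=1\right\}$, and the intersection of this sphere with $W$ is a $(|G|-2)$-dimensional sphere centered at $0_G$ containing all of $y^1,\dots,y^N$.

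The only genuinely delicate point is the passage from ``$y^c\notin\mathrm{Null}\left(A^{g_0}-A^{g_0^{-1}}\right)$'' (a statement about the matrix acting on column vectors) to ``$v^{g_0}\neq0_G$'' (a statement about $y^c$ acting as a row vector): for a general matrix these are inequivalent, and it is precisely the skew-symmetry furnished by Lemma \ref{L4.1} that rescues the implication; everything else is a dimension count. I would also flag explicitly that assertion (2) relies on the standing normalization $\|y^i\|_F=1$ (equivalently, that the $y^i$ share a common norm), without which the points need not lie on a sphere centered at the origin.
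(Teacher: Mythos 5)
Your proof is correct and follows essentially the same route as the paper: exhibit the nonzero vector $v^{g_0}=y^c\left(A^{g_0}-A^{g_0^{-1}}\right)$, observe that every $y^i$ is orthogonal to it, and intersect the resulting hyperplane through $0_G$ with the unit sphere to get the $(|G|-2)$-dimensional sphere. You are in fact slightly more careful than the paper on two points it leaves implicit, namely using skew-symmetry of $A^{g_0}-A^{g_0^{-1}}$ (Lemma \ref{L4.1}) to pass from $y^c\notin\mathrm{Null}\left(A^{g_0}-A^{g_0^{-1}}\right)$ to $v^{g_0}\neq 0_G$, and invoking the standing normalization $\|y^i\|_F=1$ for the sphere assertion.
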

\begin{proof}
Let $\{y^i\}\in\mathcal{C}$ and fix $g\in G\backslash \{e\}$. Since $\{y^i\}\not\in \mathcal{L}$, we know
\[
v(Y, g):=x^c\left(A^g-A^{g^{-1}}\right)\neq0_G.
\]
By definition of $\mathcal{C}$, we have
\[
v(Y, g) (y^i)^T=0, \quad \forall~i = 1, \cdots, N.
\]
This means that $y^i$ is orthogonal to non-zero vector $v(Y, g)$, and hence there exists  $(|G|-1)$-dimensional hyperplane which containing $\{y^i\}$ and origin $0_G$. Moreover, we can also see that there exists $(|G|-2)$-dimensional sphere centered at $0_G$ containing $\{y^i\}$(see Figure \ref{fig2}).
\end{proof}
\begin{center}
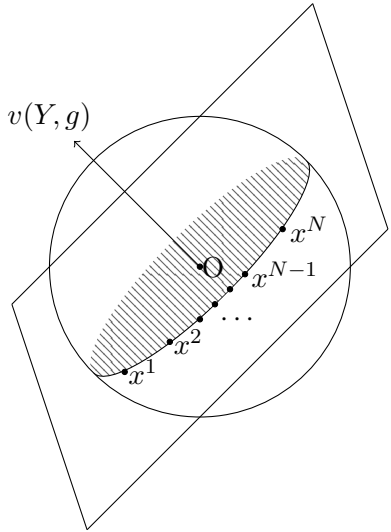
\begin{figure}[h]
\begin{tikzpicture}
\draw[black](0,0) circle [radius=2cm];
\greatcircle[black] {0,0} {2cm}{0.5cm}{45}
\draw[fill] (0,0) circle (1pt) node[xshift=5pt] {O}; 
\node (A) at (-2, 2) {$v(Y, g)$};
\draw [->] (0, 0) -- (A);
\draw [-] (-2+0.5, -2-1-0.5)-- (2+0.5,2-1-0.5);
\draw [-] (2+0.5,2-1-0.5) -- (2-0.5, 2+1+0.5);
\draw [-] (2-0.5, 2+1+0.5) -- (-2-0.5,-2+1+0.5 );
\draw [-] (-2-0.5,-2+1+0.5 ) -- (-2+0.5, -2-1-0.5);
\draw[fill] (-1,-1.4) circle (1pt) node[xshift=7pt] {$x^1$}; 
\draw[fill] (-0.4,-1.) circle (1pt) node[xshift=7pt] {$x^2$}; 
\draw[fill] (-0.,-0.7) circle (1pt) node[xshift=15pt] {$\cdots$}; 
\draw[fill] (0.2,-0.5) circle (1pt) node[xshift=10pt] {}; 
\draw[fill] (0.4,-0.3) circle (1pt) node[xshift=7pt] {}; 
\draw[fill] (0.6,-0.1) circle (1pt) node[xshift=15pt] {$x^{N-1}$}; 
\draw[fill] (1.1,0.5) circle (1pt) node[xshift=10pt] {$x^N$}; 
\end{tikzpicture}
\caption{Schematic diagram of the set $\mathcal{C}$}
\label{fig2}
\end{figure}
\end{center}

\section{Conclusion} \label{sec:5}
\setcounter{equation}{0}
In this paper, we have studied how the aggregation dynamics on a group ring can interplay with the group structure. For this, we have proposed a first-order aggregation model on group ring, and showed that the flow generated by the proposed model approaches to the equilibrium manifold for all initial data in a positive coupling regime. Our convergence result is based on the Lyapunov functional approach and the LaSalle invariance principle. Unfortunately, we cannot guarantee whether the flow will converge to some equilibrium along the full flow, since we do not have enough information on the explicit structure of the equilibrium manifold. In a full generic setting, we can decompose the equilibrium manifold as a disjoint union of three submanifolds. The geometric structure of one of such manifold depends on the group structure. For a cyclic group with prime order, we explicit analyzed the structures of the equilibrium sets. There are several issues that we do not consider here. For example, we did not show whether the flow can converge to some equilibrium along the full flow, i.e., we did not exclude a possibility of limit cycle in current analysis. Moreover, our convergence rate to the equilibrium manifold is also not explicit. Our proposed model is only for the aggregate modeling of identical particles. Thus, how to incorporate non-identical particles in our current modeling is also an interesting open question. We leave these interesting problems for a future work.

\end{document}